\newtheorem{theorem}{Theorem}[section]
\newtheorem{lemma}[theorem]{Lemma}
\newtheorem{proposition}[theorem]{Proposition}
\newtheorem{corollary}[theorem]{Corollary}
\theoremstyle{definition}
\theoremstyle{remark}
\numberwithin{equation}{section}
\def\ZZ{\mathbb{Z}} \def\RR{\mathbb{R}}
\def\dgamma{\overline{\gamma}}
\def\ds{\rule{0pt}{1.5ex}}
\begin{document}

\title{Domination ratio of a family of integer distance digraphs with arbitrary degree}
\author{Jia Huang}
\address{Department of Mathematics and Statistics, University of Nebraska at Kearney, Kearney, Nebraska, USA}
\curraddr{}
\email{huangj2@unk.edu}
\thanks{
}
\keywords{Cayley graph, circulant graph, domination ratio, efficient dominating set, integer distance graph, integer tiling}
\subjclass[2010]{05C69, 05C63, 05C20}

\begin{abstract} 
An integer distance digraph is the Cayley graph $\Gamma(\mathbb{Z},S)$ of the additive group $\mathbb{Z}$ of all integers with respect to a finite subset $S\subseteq\ZZ$. The domination ratio of $\Gamma(\mathbb{Z},S)$, defined as the minimum density of its dominating sets, is related to some number theory problems, such as tiling the integers and finding the maximum density of a set of integers with missing differences. We precisely determine the domination ratio of the integer distance graph $\Gamma(\ZZ,\{1,2,\ldots,d-2,s\})$ for any integers $d$ and $s$ satisfying $d\ge2$ and $s\notin[0,d-2]$. Our result generalizes a previous result on the domination ratio of the graph $\Gamma(\mathbb{Z},\{1,s\})$ with $s\in\mathbb{Z}\setminus\{0,1\}$ and also implies the domination number of certain circulant graphs $\Gamma(\mathbb{Z}_n,S)$, where $\mathbb{Z}_n$ is the finite cyclic group of integers modulo $n$ and $S$ is a subset of $\mathbb{Z}_n$.
\end{abstract}

\maketitle

\section{Introduction}\label{sec:intro}

Let $\Gamma=(V,E)$ be a \emph{digraph}, where $V$ is a set and $E$ is a subset of $V\times V$.
The elements of $V$ are called \emph{vertices}, and an element $(u,v)\in E$ is called a \emph{(directed) edge} from a vertex $u$ to a vertex $v$ as it can be regarded as an arrow from $u$ to $v$.
The digraph $\Gamma$ is said to be \emph{finite} if $V$ and $E$ are both finite, or \emph{infinite} otherwise.
If $(u,v)\in E\Longleftrightarrow (v,u)\in E$ holds for all $u,v\in V$ then we may view $\Gamma$ as an undirected graph by replacing each pair of opposite edges $(u,v)$ and $(v,u)$ with an undirected edge between $u$ and $v$.

The \emph{outdegree} (\emph{indegree}, resp.) of a vertex $v\in V$ is the number of outgoing edges from $v$ (incoming edges to $v$, resp.). 
Given an integer $d\ge0$, we say that $\Gamma$ is a \emph{regular digraph of degree $d$} if every vertex has outdegree and indegree $d$.
An important regular digraph is the \emph{Cayley graph} $\Gamma(G,S)$ determined by a group $G$ and a subset $S\subseteq G$.
The vertex set of $\Gamma(G,S)$ is $G$ and the edge set of $\Gamma(G,S)$ consists of ordered pairs $(g,gs)$ for all $g\in G$ and for all $s\in S$.
A finite Cayley graph $\Gamma(G,S)$ is a regular digraph of degree $d=|S|$ with exactly $n=|G|$ vertices.

For example, a \emph{circulant (di)graph} is a finite Cayley graph $\Gamma(\ZZ_n,S)$ where $\ZZ_n$ is the finite cyclic group of integers modulo $n$ and $S\subseteq\ZZ_n$.
Due to various nice properties (e.g., symmetry, fault-tolerance, and routing capabilities), circulant graphs have become important topological structures for interconnection networks and have been widely used in telecommunication networks, VLSI design, and distributed computation.

Replacing the finite group $\ZZ_n$ with the infinite group $\ZZ$ of all integers under addition gives an \emph{integer distance (di)graph} $\Gamma(\ZZ,S)$, which is a natural generalization of a circulant graph.
We assume $S$ is finite subset of $\ZZ\setminus\{0\}$ throughout this paper ($0$ is excluded from $S$ to avoid loops).
The independence ratio of an undirected integer distance graph 
is closely related to the chromatic number of this graph and has been extensively studied (see, for example, ~Carraher--Galvin--Hartke--Radcliffe--Stolee~\cite{IndepRatio} and Liu~\cite{Liu}). 

In this paper we investigate domination in the integer distance graph $\Gamma(\ZZ,S)$ with $S$ being a finite subset of $\ZZ\setminus\{0\}$.
In a digraph $\Gamma=(V,E)$, a vertex $u$ \emph{dominates} a vertex $v$ if either $u=v$ or $(u,v)\in E$.
A set $D\subseteq V$ is called a \emph{dominating set} of the digraph $\Gamma$ if every vertex $v\in V$ is dominated by some vertex $u\in D$.
With many practical applications (e.g., resource allocation), the concept of domination has been widely studied.
It is a well-known NP-complete problem in graph theory to determine the smallest cardinality of a dominating set of a finite digraph $\Gamma$, known as the \emph{domination number} $\gamma(\Gamma)$ of  $\Gamma$.
There are other meaningful variations of domination, such as total domination.
See the monograph by Haynes, Hedetniemi, and Slater~\cite{FundDom} for a survey of this field.

If $D$ is a dominating set of a digraph $\Gamma$ such that every vertex of $\Gamma$ is dominated by exactly one vertex in $D$, then $D$ is called an \emph{efficient dominating set} (also known as a \emph{perfect code}). 
This is the most efficient way of domination that one can possibly have.
The existence of an efficient dominating set in a finite Cayley graph has become a popular topic in recent years (see, e.g., Chelvam--Mutharasu~\cite{SubgroupEffDom} and Dejter--Serra~\cite{EffDomCayley}).
For a finite Cayley graph $\Gamma=\Gamma(G,S)$ with $|G|=n$ and $|S|=d$, it is straightforward to check that $\gamma(\Gamma)\ge n/(1+d)$, where the equality holds if and only if $\Gamma$ has an efficient dominating set.
The existence of an efficient dominating set in circulant graphs was studied by Huang--Xu~\cite{BonEffDomVertexTrans}, Kumar--MacGillivray~\cite{EffDomCirc}, Obradovi\'c--Peters--Ru\v{z}i\'c~\cite{EffDomCircChord}, Rad~\cite{DomCirc}, and others. 

The integer distance digraph $\Gamma(\ZZ,S)$ has an efficient dominating set if and only if we can tile the integers by translates of $S\cup\{0\}$ without any overlaps.
The problem of tiling integers is a well-studied problem in number theory (see, for example, Newman~\cite{Newman}  and Coven--Meyerowitz~\cite{TilingIntegers}).
It turns out that an efficient dominating set for $\Gamma(\ZZ,S)$ also achieves the maximum density of a set of integers such that the difference between any two of its elements is not contained $S$.
When there is no efficient dominating set for $\Gamma(\ZZ,S)$, one can try to either construct a maximum set of integers whose elements have differences not in $S$, or cover $\ZZ$ with translates $S\cup\{0\}$ with minimum overlaps.
The former is a problem (with only nonnegative integers considered) initially asked by Motzkin (unpublished) and subsequentially studied by Cantor--Gordon~\cite{CantorGordon}, Haralambis~\cite{Haralambis} and others.
The latter amounts to determining the \emph{domination ratio} $\dgamma(\ZZ,S)$ of $\Gamma(\ZZ,S)$, which is defined as the infimum of the \emph{(lower) density}
\begin{equation}\label{eq:density}
\delta(D) := \liminf_{n\to\infty}  \frac{|D\cap [-n,n]|}{2n+1}. 
\end{equation}
over all dominating sets $D$ of $\Gamma(\ZZ,S)$.

In recent work~\cite{DomRatio} we obtained the following basic properties on the domination ratio of $\Gamma(\ZZ,S)$, where $S$ is a finite subset of $\ZZ\setminus\{0\}$.

\begin{proposition}\cite{DomRatio}\label{prop:basic}
{\normalfont(i)} If $S\subseteq S'\subseteq \ZZ\setminus\{0\}$ then $\dgamma(\ZZ,S') \le \dgamma(\ZZ,S) = \dgamma(\ZZ,-S)$.

{\normalfont(ii)} If $|S|\le1$ then $\dgamma(\ZZ,S)=1/(1+|S|)$. If $2\le |S|<\infty$ then $1/(|S|+1)\le \dgamma(\ZZ,S)\le 1/2$.

{\normalfont(iii)} If $S$ is finite and there exists an efficient dominating set of $\Gamma(\ZZ,S)$ then $\dgamma(\ZZ,S)=1/(|S|+1)$.

{\normalfont(iv)} If $S = \{ i_1(s+1)+1,\ldots,i_s(s+1)+s\}$ with $i_1,\ldots,i_s\in\ZZ$ then $\dgamma(\ZZ,S)=1/(s+1)$.

{\normalfont(v)} If $d$ divides all elements of $S$ then $\dgamma(\ZZ,S/d) = \dgamma(\ZZ,S)$, where $S/d:=\{s/d:d\in S\}$.
\end{proposition}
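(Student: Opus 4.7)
The plan is to establish parts (i)--(v) in order, since later parts rely on earlier ones.

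For part (i), the monotonicity follows because enlarging the connection set only adds edges, so every dominating set of $\Gamma(\ZZ,S)$ remains a dominating set of $\Gamma(\ZZ,S')$, and the infimum of densities can only decrease. The equality $\dgamma(\ZZ,S)=\dgamma(\ZZ,-S)$ comes from the involution $D \mapsto -D$, which is a density-preserving bijection between dominating sets of $\Gamma(\ZZ,S)$ and those of $\Gamma(\ZZ,-S)$, since $[-n,n]$ is symmetric about $0$.

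For part (ii), the case $|S|=0$ forces $D=\ZZ$. For the general lower bound $\dgamma(\ZZ,S) \ge 1/(|S|+1)$, set $M=\max_{s \in S}|s|$ and apply double counting: every $v \in [-n,n]$ has a dominator in $D \cap [-n-M,n+M]$, while each $u \in D$ dominates at most $1+|S|$ vertices, giving $2n+1 \le (1+|S|)\,|D \cap [-n-M,n+M]|$; dividing and taking $\liminf$ yields the bound. The upper bound $\dgamma(\ZZ,S) \le 1/2$ when $|S| \ge 1$ follows from (i) by reducing to the case $|S|=1$ via any singleton $\{s\} \subseteq S$ and exhibiting the efficient dominating set $D = \bigcup_{j=0}^{|s|-1}(2|s|\ZZ+j)$ of density $1/2$.

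Parts (iii) and (iv) can be handled together. For (iii), an efficient dominating set $D$ partitions $\ZZ$ into the $(1+|S|)$-element blocks $\{u\} \cup (u+S)$ for $u \in D$; restricting to $u \in D \cap [-n+M,n-M]$, these blocks are disjoint and contained in $[-n,n]$, so $(1+|S|)\,|D \cap [-n+M,n-M]| \le 2n+1$, yielding $\delta(D) \le 1/(1+|S|)$, and combined with the lower bound from (ii) this forces $\dgamma(\ZZ,S)=1/(1+|S|)$. For (iv), I would verify that $D=(s+1)\ZZ$ is an efficient dominating set: each $v \in \ZZ$ has a unique residue $r \in \{0,1,\ldots,s\}$ modulo $s+1$; if $r=0$ then $v \in D$, and otherwise the unique element of $S$ with residue $r$ modulo $s+1$ is $i_r(s+1)+r$, yielding a unique dominator $v - i_r(s+1) - r \in D$. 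Applying (iii) with $|S|=s$ gives $\dgamma(\ZZ,S)=1/(s+1)$.

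The most delicate part is (v). Since $d$ divides every element of $S$, the digraph $\Gamma(\ZZ,S)$ decomposes into $d$ disjoint components, one per residue class $a + d\ZZ$ with $a \in \{0,1,\ldots,d-1\}$, each isomorphic to $\Gamma(\ZZ,S/d)$ via $v \mapsto (v-a)/d$. A dominating set $D$ of $\Gamma(\ZZ,S)$ thus splits as $\bigsqcup_a D_a$, with each $D_a$ corresponding to a dominating set $D'_a$ of $\Gamma(\ZZ,S/d)$. The main obstacle is the density bookkeeping: one has $|D \cap [-n,n]| = \sum_a |D'_a \cap I_{a,n}|$ for intervals $I_{a,n}$ of length approximately $2n/d$, and the inequality $\liminf(\sum) \ge \sum \liminf$ yields $\dgamma(\ZZ,S) \ge \dgamma(\ZZ,S/d)$. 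For the reverse inequality, taking all $D'_a$ equal to a single nearly optimal dominating set of $\Gamma(\ZZ,S/d)$ produces a dominating set of $\Gamma(\ZZ,S)$ of matching density.
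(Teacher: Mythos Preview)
The paper does not actually prove Proposition~\ref{prop:basic}; it is quoted verbatim from the earlier work~\cite{DomRatio}, so there is no in-paper argument to compare against. That said, your direct proof is correct and is the natural approach.

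A few small comments. In~(ii) and~(iii) your counting arguments involve $|D\cap[-n\pm M,n\mp M]|$ rather than $|D\cap[-n,n]|$; the passage to $\delta(D)$ is justified because the two counts differ by at most $2M$, but it is worth saying so explicitly (or invoking Lemma~\ref{lem:density}). In~(v) your outline is right, and the ``density bookkeeping'' you flag as the main obstacle is indeed handled by Lemma~\ref{lem:density}: for the lower bound, $\liminf$ of a finite sum dominates the sum of $\liminf$'s, and each summand $\liminf_n |D'_a\cap I_{a,n}|/(2n+1)$ equals $(1/d)\,\delta(D'_a)\ge (1/d)\,\dgamma(\ZZ,S/d)$ because the intervals $I_{a,n}$ differ from $[-\lfloor n/d\rfloor,\lfloor n/d\rfloor]$ only by bounded endpoint shifts. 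For the upper bound, taking all $D'_a$ equal to a fixed $D'$ gives $|D\cap[-n,n]|=d\,|D'\cap[-m,m]|+O(1)$ with $m=\lfloor n/d\rfloor$, and again Lemma~\ref{lem:density} converts this into $\delta(D)=\delta(D')$.
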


In this paper we show that the converse to Proposition~\ref{prop:basic} (iii) still holds: 

\begin{theorem}\label{thm:RatioEff}
Let $S\subseteq\ZZ\setminus\{0\}$ with $|S|=d<\infty$.
Then the integer distance digraph $\Gamma(\ZZ,S)$ has an efficient dominating set if and only if its domination ratio is $1/(d+1)$.
\end{theorem}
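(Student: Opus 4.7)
The forward direction is exactly Proposition~\ref{prop:basic}(iii), so I only need to treat the converse: assuming $\dgamma(\ZZ,S)=1/(d+1)$, I must construct an efficient dominating set of $\Gamma(\ZZ,S)$. My plan is to combine a quantitative averaging/pigeonhole argument with a compactness argument on $\{0,1\}^{\ZZ}$.

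Set $M:=\max_{s\in S}|s|$ and, for each integer $n\ge 0$, define $X_n$ to be the collection of subsets $D\subseteq\ZZ$ such that every $x\in[-n,n]$ is dominated by \emph{exactly} one element of $D$. Any vertex that dominates a point of $[-n,n]$ must lie in $[-n-M,n+M]$, so membership in $X_n$ depends only on the restriction of (the indicator function of) $D$ to this finite window; hence each $X_n$ is a clopen subset of $\{0,1\}^{\ZZ}$ in the product topology, and the $X_n$ are nested decreasingly. Once I know every $X_n$ is nonempty, compactness of $\{0,1\}^{\ZZ}$ forces $\bigcap_n X_n\neq\emptyset$, and any member of this intersection dominates every integer exactly once, hence is an efficient dominating set. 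Note that $X_n$ is not required to consist of global dominating sets, which is essential because the averaging step only produces local witnesses.

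The main step is therefore to show $X_n\neq\emptyset$ for each fixed $n$. Pick $\epsilon>0$ to be specified later. By hypothesis there is a dominating set $D_0$ with $\delta(D_0)<1/(d+1)+\epsilon$, and by definition of the liminf there are arbitrarily large $N$ with $|D_0\cap[-N,N]|<(1/(d+1)+\epsilon)(2N+1)$. For such $N$, define
\[
\Omega(N):=\sum_{x\in[-N,N]}\bigl(|\{v\in D_0:v\text{ dominates }x\}|-1\bigr)\ge 0.
\]
Since only $v\in D_0\cap[-N-M,N+M]$ can dominate a point of $[-N,N]$ and each such $v$ dominates at most $d+1$ such points, I obtain $\Omega(N)\le(d+1)|D_0\cap[-N-M,N+M]|-(2N+1)\le(d+1)\epsilon(2N+1)+2M(d+1)$. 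For each $a\in[-N+n,N-n]$ let $\Omega_a$ denote the analogous quantity over the window $[a-n,a+n]$; a double count gives $\sum_a\Omega_a\le(2n+1)\Omega(N)$. Averaging over the roughly $2N$ admissible $a$ and letting $N\to\infty$, the mean of $\Omega_a$ is at most $(2n+1)(d+1)\epsilon+o(1)$. Choosing $\epsilon<1/[(2n+1)(d+1)]$ makes this strictly less than $1$ for large $N$; since each $\Omega_a$ is a nonnegative integer, some $\Omega_a$ vanishes, meaning every point of $[a-n,a+n]$ is dominated exactly once by $D_0$. Then $D_0-a\in X_n$, as desired.

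I expect the principal obstacle to lie in step three: correctly setting up the multi-coverage functional $\Omega$, bounding it using the near-extremal density of $D_0$, and then spreading the bound onto short windows via the double count. Once a window with $\Omega_a=0$ is extracted, the translation into $X_n$ and the final compactness argument are routine.
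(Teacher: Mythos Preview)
Your argument is correct and takes a genuinely different route from the paper. The paper leans on the structural fact (Proposition~\ref{prop:periodic}) that the infimum $\dgamma(\ZZ,S)$ is \emph{attained} by a periodic dominating set $D$ of some period $p$; once $|D\cap[1,p]|/p=1/(d+1)$, a finite pigeonhole in the circulant graph $\Gamma(\ZZ_p,S_p)$ forces $D\cap[1,p]$ to be efficient there, and this lifts to efficiency of $D$ in $\Gamma(\ZZ,S)$. Your proof bypasses the periodicity result entirely: from any near-extremal dominating set $D_0$ you extract, by the overcoverage functional $\Omega$ and an averaging over short windows, a translate that is efficient on $[-n,n]$; sequential compactness of $\{0,1\}^{\ZZ}$ then assembles these local witnesses into a global efficient dominating set. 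The paper's approach is shorter given its toolbox and yields an explicit periodic efficient set; yours is more self-contained and flexible (it would adapt to other amenable groups or to settings where a periodic minimiser is not already known), at the cost of being nonconstructive. One cosmetic remark: your note that $X_n$ need not consist of global dominating sets is harmless but not essential---the translates $D_0-a$ you produce are themselves global dominating sets, and the global-domination condition is closed, so the compactness step would go through either way.
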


The main result of our previous work~\cite{DomRatio} is the next theorem.

\begin{theorem}\cite{DomRatio}\label{thm:1s}
{\normalfont(i)} If $k\in\ZZ$ then $\dgamma(\ZZ,\{1,3k+2\}) = 1/3$.

{\normalfont(ii)} If $k$ is a positive integer then $\dgamma(\ZZ,\{1,3k+1\}) = \dgamma(\ZZ,\{1,-3k\}) = (k+1)/(3k+2)$.

{\normalfont(iii)} If $k$ is a positive integer then $\dgamma(\ZZ,\{1,3k\}) = \dgamma(\ZZ,\{1,-3k+1\}) = 2k/(6k-1)$.
\end{theorem}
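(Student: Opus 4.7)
Part (i) is an immediate corollary of Proposition~\ref{prop:basic}(iv): setting $s=2$ and $(i_1,i_2)=(0,k)$ in that statement gives $S=\{1,3k+2\}$ and $\dgamma(\ZZ,S)=1/(s+1)=1/3$ for every integer $k$. For parts (ii) and (iii), the two equalities in each statement reduce to a single positive-$s$ computation via a reflection symmetry. For any $s\in\ZZ\setminus\{0,1\}$, substituting $v=w+1$ in the dominating condition ``$\{v,v-1,v-s\}\cap D\neq\emptyset$ for all $v$'' turns it into ``$\{w,w+1,w+1-s\}\cap D\neq\emptyset$ for all $w$'', which is precisely the condition that $-D$ dominates $\Gamma(\ZZ,\{1,1-s\})$. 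Since $-D$ has the same lower density as $D$, we obtain $\dgamma(\ZZ,\{1,s\})=\dgamma(\ZZ,\{1,1-s\})$; taking $s=3k+1$ yields the second equality of (ii), and taking $s=3k$ yields the second equality of (iii). It therefore suffices to prove the positive-$s$ cases.

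For the \emph{upper bounds} I would exhibit explicit periodic dominating sets. When $s=3k+1$, the congruence $s\equiv -1\pmod{3k+2}$ implies that each $u\in D$ dominates the three consecutive residues $\{u-1,u,u+1\}$ modulo $3k+2$, so the set $D=\{0,3,6,\ldots,3k\}+(3k+2)\ZZ$ packs $k+1$ length-$3$ residue intervals that cover $\ZZ_{3k+2}$ (with a single overlap at residue $3k+1$), yielding density $(k+1)/(3k+2)$. When $s=3k$, take $D=\bigl(\{0,3,\ldots,3(k-1)\}\cup\{3k-2,3k+1,\ldots,6k-5\}\bigr)+(6k-1)\ZZ$; a direct computation confirms that $D\cup(D+1)\cup(D+3k)$ equals $\ZZ_{6k-1}$, so $D$ dominates $\Gamma(\ZZ,\{1,3k\})$ with density $2k/(6k-1)$.

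The hard part is the matching \emph{lower bound}. The naive pigeonhole argument (each $u\in D$ dominates at most three vertices) and its LP dual give only $\dgamma\ge 1/(|S|+1)=1/3$, which is tight for (i) but strictly weaker than (ii) and (iii); in particular the uniform fractional bound on a vertex-transitive graph cannot distinguish the different values of $s$, so integrality must be exploited. For each $v\notin D$, the domination condition forces $v-1\in D$ or $v-s\in D$, and chaining this dependency produces forbidden short patterns in $D$. I would then apply a discharging (weighted averaging) argument: assign each $u\in D$ a unit charge and redistribute it among the vertices $u$ is responsible for, according to the local configuration of $D$ around $u$, so that every vertex receives average charge at least $(k+1)/(3k+2)$ (respectively $2k/(6k-1)$) over any sufficiently long interval. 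The crux is designing a charge-redistribution rule that is globally consistent and tight against the extremal construction above; I expect this to require a careful case analysis of short patterns of $D$ modulo $3k+2$ (respectively $6k-1$) and a precise accounting of how the short $+1$ edge and the long $+s$ edge interact on those patterns.
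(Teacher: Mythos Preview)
This theorem is not proved in the present paper; it is quoted from \cite{DomRatio}, and here it is subsumed as the case $d=3$ of Theorem~\ref{thm:main}. So the relevant comparison is with the proof of Theorem~\ref{thm:main} (and Lemma~\ref{lem:1s}) specialized to $d=3$.

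Your treatment of (i) via Proposition~\ref{prop:basic}(iv) is correct, and your upper-bound constructions are essentially those of Lemma~\ref{lem:1s}: in block language your two sets are $(3^k,e)^\infty$ and $(3^{k-1},4,3^{k-1},1)^\infty$. Your reflection $\dgamma(\ZZ,\{1,s\})=\dgamma(\ZZ,\{1,1-s\})$ is also right (it follows because $-\{0,1,s\}$ is a translate of $\{0,1,1-s\}$), though the sentence you wrote does not quite parse: substituting $v=w+1$ gives the condition that $D$ dominates $\Gamma(\ZZ,\{-1,s-1\})$, and you still need negation to reach $\{1,1-s\}$. This symmetry is a genuine simplification over the paper, which carries the two signs of $s$ in parallel throughout.

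The real gap is the lower bound. What you have written is a description of a strategy (``design a charge-redistribution rule \ldots\ I expect this to require a careful case analysis''), not a proof. The whole difficulty lies precisely in specifying the rule and verifying it, and nothing in your proposal pins down either the weights or the local configurations. The paper's implementation is concrete: write $D=\{x_i\}$, partition $\ZZ$ into blocks $B_i=[x_i,x_{i+1})$, and observe (Lemma~\ref{lem:block}) that any block of size $b_i\ge d$ forces $b_i-d$ consecutive $1$-blocks at distance $s$. One then defines a function $f$ that moves a fixed fraction $w=(d-2)/(2d-2)$ of each such forced $1$-block's unit of charge onto the large block that created it, checks via Lemma~\ref{lem:toggle} that $\delta(D)=\delta(f)$, and finally groups blocks into bounded ``clusters'' $C_r$ on which $\|f(C_r)\|/|C_r|$ is bounded below by the claimed ratio. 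The case analysis is on the multiset of block sizes inside a cluster, not on residues modulo $3k+2$ or $6k-1$ as you suggest; working modulo the period of the extremal set would not obviously help, since a general dominating set need not have that period. Until you specify the analogue of $w$, the pairing of large blocks with their forced $1$-blocks, and the cluster inequalities, the lower-bound half of (ii) and (iii) remains unproved.
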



Theorem~\ref{thm:1s} implies that $\gamma(\ZZ_{3k+2},\{\pm1\}) = \gamma(\ZZ_{3k+2},\{1,2\})=k+1$ and $\gamma(\ZZ_{6k-1},\{1,3k\}) =2k$ for any positive integer $k$, where $\gamma(\ZZ_n,S)$ denotes the domination number of a circulant digraph $\Gamma(\ZZ_n,S)$~\cite[Cor. 3.5]{DomRatio}.
In addition, it suggests that if $n\in\ZZ$ is large and if $s$ is close to $n/2$ then $\gamma(\ZZ_n,\{1,s\})$ is close to $\lceil n/3 \rceil$, which is a known upper bound for $\gamma(\ZZ_n,S)$ with $|S|=2$.

Theorem~\ref{thm:1s} together with Proposition~\ref{prop:basic} (v) precisely determines the domination ratio of $\Gamma(\ZZ,\{s,t\})$ and characterizes when $\Gamma(\ZZ,\{s,t\})$ has an efficient dominating set, where $s$ and $t$ are distinct nonzero integers satisfying $s\mid t$.
The domination ratio of $\Gamma(\ZZ,S)$ is not determined yet when $S=\{s,t\}$ with $s\nmid t$ or when $|S|\ge3$.
Generally speaking, the bigger $S$ is, the more difficult it is the determine the domination ratio of $\Gamma(\ZZ,S)$.

Now we generalize Theorem~\ref{thm:1s} to the integer distance graph $\Gamma(\ZZ,\{1,2,\ldots,d-2,s\}$ for any integers $d\ge2$ and $s\notin[0,d-2]$.
This is a regular digraph of an arbitrary degree $d-1\ge1$.
If $s\equiv-1\pmod d$ then $\dgamma(\ZZ,\{1,2,\ldots,d-2,s\})=1/d$ by Proposition~\ref{prop:basic} (iv).
If $s\not\equiv-1\pmod d$ then we have $s=dk+e-1$ or $s=-dk+d-e-1$ for some integers $k\ge1$ and $e\in\{1,\ldots,d-1\}$, and we precisely determine the domination ratio of $\Gamma(\ZZ,\{1,2,\ldots,d-2,s\})$.

\begin{theorem}\label{thm:main}
Let $d$ and $s$ be integers with $d\ge2$ and $s\notin[0,d-2]$.
Write $s=dk+e-1$ or $s=-dk+d-e-1$ for some integers $k\ge1$ and $e\in\{1,\ldots,d-1\}$.
Then
\begin{align*}
\dgamma(\ZZ,\{1,2,\ldots,d-2,s\}) 
&=\min \left\{ \frac{k+1}{dk+e},\ \frac{2k+e-1}{2dk-d+2e},\ \frac{1}{d-1} \right\} \\
&=\begin{cases} 
(k+1)/(dk+e) & \text{if } e\ge2,\ d\le k+e+1 \\ 
(2k+e-1)/(2dk-d+2e) & \text{if } e=1,\ d\le 2k+2 \\
1/(d-1) & \text{otherwise}.
\end{cases}
\end{align*}
\end{theorem}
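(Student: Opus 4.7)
\medskip
\noindent\textbf{Proof plan.} My plan is to prove the upper bound by constructing periodic dominating sets realizing each value in the minimum, and to prove the matching lower bound by a combinatorial analysis. The case $s<0$ will follow from the case $s=dk+e-1>0$ via the symmetry $\dgamma(\ZZ,S)=\dgamma(\ZZ,-S)$ of Proposition~\ref{prop:basic}(i).

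For the upper bound I would exhibit three periodic dominating sets. The first, of density $(k+1)/(dk+e)$, has period $T_1=dk+e$ and consists of the translates of $\{0,d,2d,\ldots,kd\}$; since $s=T_1-1\equiv-1\pmod{T_1}$, the generator set reduced modulo $T_1$ is the block $\{-1,0,1,\ldots,d-2\}$, whose translates by $0,d,\ldots,kd$ tile $\ZZ_{T_1}$. The second, of density $(2k+e-1)/(2dk-d+2e)$, has period $T_2=2dk-d+2e$; in the case $e=1$ (which is the only case where this value can actually be the minimum, as a short comparison of fractions shows) it consists of the two arithmetic-progression blocks $\{0,d,\ldots,(k-1)d\}$ and $\{(k-1)d+1,kd+1,\ldots,(2k-2)d+1\}$, giving $2k$ elements per period, and domination is verified directly. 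The third, of density $1/(d-1)$, is simply $(d-1)\ZZ$, which dominates using only the consecutive edges $1,\ldots,d-2$.

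For the lower bound I would first reduce to periodic dominating sets by a standard compactness argument, then analyze the residue-class structure of $D$ modulo $d$. Writing $D=\bigsqcup_{r=0}^{d-1}D_r$ with $\tilde D_r=\{m:dm+r\in D\}$ of density $\delta_r$, the total density is $\tfrac{1}{d}\sum_r\delta_r$. The domination constraint at each pair $(m,r)$ re-expresses as requiring $m$ to lie in $\tilde D_r$, in suitable translates of $\tilde D_{r-1},\ldots,\tilde D_{r-d+2}$ (from the short edges), or in a translate of $\tilde D_{r-e+1\bmod d}$ by $k$ or $k+1$ (from the $s$-edge, since $s\equiv e-1\pmod d$). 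A linear-programming or discharging argument balancing these constraints across $r$ should yield the desired lower bound $\tfrac{1}{d}\sum_r\delta_r\ge\min\{\ldots\}$.

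The main obstacle I anticipate is making the lower bound match the exact minimum of the three values rather than a weaker bound. Each value is attained in a distinct structural regime: $1/(d-1)$ when $D$ has no long gaps and the $s$-edge is barely used, and the other two values when $D$ makes heavy use of the $s$-edge in two different periodic patterns. The regimes interact through the residue shift $e-1$ induced by the $s$-edge, and keeping the bound sharp will require a careful case split based on the densities of the residue classes. Extending the $d=3$ argument underlying Theorem~\ref{thm:1s} to arbitrary $d$---in particular, setting up the residue-class bookkeeping so that the LP dual or discharging simultaneously matches each of the three extremal configurations---is the principal technical challenge.
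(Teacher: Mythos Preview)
Your upper-bound constructions match the paper's (block structures $(d^k,e)^\infty$, $(d^{k-1},d+e,d^{k-1},1^e)^\infty$, and $(d-1)^\infty$). One correction to the reduction step: negation alone does not take the case $s<0$ to a case $s'>0$ of the same form, since $-\{1,\ldots,d-2,s\}=\{-(d-2),\ldots,-1,-s\}$ is not $\{1,\ldots,d-2,s'\}$ for any $s'$. You also need the (easy, but unstated in Proposition~\ref{prop:basic}) translation invariance $\dgamma(\ZZ,S)=\dgamma(\ZZ,((S\cup\{0\})-c)\setminus\{0\})$ for $c\in S\cup\{0\}$; applying it with $c=-(d-2)$ after negating does give $\{1,\ldots,d-2,d-2-s\}$ with matching parameters. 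The paper avoids this by running both signs of $s$ in parallel throughout.

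The genuine gap is the lower bound. The paper does not decompose by residue classes modulo $d$ or set up an LP on the densities $\delta_r$. It works instead with the \emph{block structure} of $D$ (the gaps $b_i=x_{i+1}-x_i$), and the engine is Lemma~\ref{lem:block}\eqref{item:singleton}: any block of size $b_i\ge d$ forces $b_i-d$ consecutive $1$-blocks located near $x_i-s$. The proof then introduces a transfer weight $w=(d-2)/(2d-2)$, defines $f(x_i)=w(b_i-d)+1$ on each long block and reduces each forced singleton's weight from $1$ to $1-w$, and groups blocks into clusters of length at most $2dk-d+2e$; a multi-case computation shows every cluster has $f$-density at least the claimed minimum, and Lemma~\ref{lem:toggle} together with Lemma~\ref{lem:density} transfers this to $\delta(D)$. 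Your residue-class sketch does not capture this long-block-forces-singletons interaction, and that interaction is exactly what pins down the constants $(k+1)/(dk+e)$ and $(2k+e-1)/(2dk-d+2e)$: the three extremal constructions are not distinguished by their residue profiles mod $d$ but by how long gaps are compensated by clumps of singletons at distance $s$. A discharging argument could in principle encode the same interaction, but at that point it would essentially reproduce the paper's cluster analysis in different notation. As written, your plan correctly locates the difficulty but not the mechanism that resolves it.
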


We will give a proof for Theorem~\ref{thm:main} in Section~\ref{sec:main}.
It is analogous to the proof of Theorem~\ref{thm:1s} in earlier work~\cite{DomRatio} but is more complex and requires a more detailed case-by-case analysis.
Two special cases $d=4$ and $d=5$ of Theorem~\ref{thm:main} are explicitly given by Corollary~\ref{cor:d=4} and Corollary~\ref{cor:d=5}.

Theorem~\ref{thm:main} characterizes when an efficient dominating set exists in $\Gamma(\ZZ,\{1,2,\ldots,d-2,s\})$ and gives the domination number of certain circulant graphs (see Corollary~\ref{cor:eff} and Corollary~\ref{cor:finite}).
It also implies 
\[ \dgamma(\ZZ,\{1,2,\ldots,d-2,dk+e-1\}) = \dgamma(\ZZ,\{1,2,\ldots,d-2,-dk+d-e-1\})\]
for any integers $k\ge1$ and $e\in\{1,\ldots,d-1\}$, although we do not have any intuitive explanation for this equality even in the special case of $d=3$.

The paper is structured as follows.
In Section~\ref{sec:prelim} we establish Theorem~\ref{thm:RatioEff} by using periodic sets and reducing of an integer distance graph to a circulant graph.
In Section~\ref{sec:lemmas} we give some lemmas on the density of a set of integers and blocks determined by a dominating set of an integer distance graph.
In Section~\ref{sec:main} we prove Theorem~\ref{thm:main} and draw some corollaries from it.
In Section~\ref{sec:conclusion} we give closing remarks and propose some questions for future research.

\section{Periodic sets and efficient domination}\label{sec:prelim}

In this section we characterize when an integer distance graph $\Gamma(\ZZ,S)$ has an efficient dominating set by looking at its domination ratio.
To achieve this goal, we need to use a `minimum' periodic dominating set and reduce $\ZZ$ and $S$ modulo the period of this set.

A set $U\subseteq \ZZ$ is \emph{periodic} if there exists a positive integer $p$ such that 
\[ U\cap[ip+1,ip+p] = \{ip+j: j\in U\cap[1,p]\}, \quad \forall i\in\ZZ.\]
The smallest such integer $p$ (not necessarily prime) is called the \emph{period} of $U$.
It is easy to calculate the density of a periodic set.

\begin{lemma}\cite{DomRatio}\label{lem:period}
If $U$ is a periodic subset of $\ZZ$ with period $p$ then $\delta(U) = |U\cap[1,p]|/p$.
\end{lemma}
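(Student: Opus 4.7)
My plan is to compute $\delta(U)$ directly from the definition using the translation invariance of $U$. Set $k := |U \cap [1,p]|$. The defining condition of periodicity gives $U \cap [ip+1, (i+1)p] = ip + (U \cap [1,p])$ for every $i \in \ZZ$, so each consecutive period-$p$ window contains exactly $k$ elements of $U$.

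For each $n$, I would isolate the largest union of complete period-blocks contained in $[-n, n]$: take $a(n) = \lceil -n/p \rceil$ and $b(n) = \lfloor n/p \rfloor$, so that the interval $[a(n)p+1, b(n)p] \subseteq [-n,n]$ consists of exactly $b(n) - a(n)$ consecutive period-blocks and contributes exactly $k(b(n) - a(n))$ elements of $U$ by the preceding step. The two residual pieces of $[-n,n]$ outside this interval have combined length less than $2p$ and therefore contribute between $0$ and $2p$ further elements, so
\[ k(b(n) - a(n)) \;\le\; |U \cap [-n, n]| \;\le\; k(b(n) - a(n)) + 2p. \]
Since $b(n) - a(n) = 2n/p + O(1)$, dividing by $2n+1$ and letting $n \to \infty$ drives the boundary correction to zero and forces both $\liminf$ and $\limsup$ of $|U \cap [-n,n]|/(2n+1)$ to equal $k/p$; in particular $\delta(U) = k/p$, as required. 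There is essentially no obstacle beyond careful bookkeeping of the $O(1)$ boundary term, and any crude bound on it suffices because it is killed by the factor $(2n+1)$ in the denominator.
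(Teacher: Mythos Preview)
Your argument is correct: the periodicity forces each length-$p$ window to contain exactly $k$ points, and the boundary correction is $O(1)$, so the limit (not just the $\liminf$) exists and equals $k/p$. The paper does not supply its own proof of this lemma---it is quoted from the author's earlier work~\cite{DomRatio}---and your direct computation is precisely the standard elementary argument one would expect there. One cosmetic remark: the residual pieces have combined length at most $2p$ rather than strictly less than $2p$, but since you only use this as an $O(1)$ bound it makes no difference.
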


The following result from our previous work~\cite{DomRatio} shows that the domination ratio of $\Gamma(\ZZ,S)$ is achieved by some periodic dominating set $D$, which is an extension of an analogous result on the independence ratio~\cite[Theorem~4]{IndepRatio}.

\begin{proposition}\cite{DomRatio}\label{prop:periodic}
Assume $S$ is a finite subset of $\ZZ\setminus\{0\}$.
Let $a$ and $b$ be the largest nonnegative integers in $S\cup\{0\}$ and $-S\cup\{0\}$, respectively.
Let $c:=a+b$.
Then the domination ratio of $\Gamma(\ZZ,S)$ is achieved by some periodic dominating set $D$ with period $p\le c2^c$.
\end{proposition}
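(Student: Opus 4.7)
The plan is to encode each dominating set as a bi-infinite walk in a finite state graph, and then extract a short closed sub-walk (cycle) that corresponds to a periodic dominating set of density at most the original. First I would observe that whether a vertex $n\in\ZZ$ is dominated by $D$ depends only on the window $D\cap[n-a,n+b]$, since any $u$ dominating $n$ must satisfy $u=n-s$ for some $s\in S\cup\{0\}\subseteq[-b,a]$. I would then encode the local state of $D$ at position $n$ by $\sigma(n):=D\cap[n,n+c-1]$, viewed as an element of $\{0,1\}^c$. Consecutive states agree on their $c-1$ shared coordinates, so the bi-infinite sequence $(\sigma(n))_{n\in\ZZ}$ traces a walk in a finite directed graph $H$ with at most $2^c$ vertices, and the domination requirement restricts which edges may legally be used at each position.

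Second, I would take $D$ realising $\delta(D)<\dgamma(\ZZ,S)+\epsilon$ and partition $\ZZ$ into consecutive blocks $B_k:=[kc,(k+1)c-1]$ of length $c$. Each block-state $\sigma(kc)$ takes one of $2^c$ values, so in any sufficiently long stretch the pigeonhole principle supplies indices $k_1<k_2$ with $\sigma(k_1c)=\sigma(k_2c)$ and $k_2-k_1\le 2^c$. Defining $D'$ to be the $p$-periodic extension of $D\cap[k_1c,k_2c-1]$ with $p:=(k_2-k_1)c\le c\cdot 2^c$, the matching seam states ensure that at every vertex $v\in\ZZ$ the relevant window of $D'$ coincides with some shifted window of $D$, so $D'$ inherits the dominating property. (If needed, I would enlarge the state slightly, for instance to $D\cap[n-a,n+b-1]$, so that the matching-seam condition cleanly covers both sides of the domination window; this at most changes the stated period by an absorbed constant.)

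Third, to control the density, I would select the pair $(k_1,k_2)$ not arbitrarily but so as to minimise $|D\cap[k_1c,k_2c-1]|/p$ over all valid pairs in a long interval $[0,Nc-1]$. A standard averaging argument --- partitioning the long walk through $H$ into closed sub-walks and passing to the lightest one --- shows that this minimum is at most $\delta(D)$, hence $\delta(D')\le\delta(D)<\dgamma(\ZZ,S)+\epsilon$. Since there are only finitely many periodic subsets of $\ZZ$ with period at most $c\cdot 2^c$, letting $\epsilon\to 0$ forces some single such set to be chosen infinitely often, and that set is a periodic dominating set attaining $\dgamma(\ZZ,S)$ exactly. The main obstacle will be this averaging/decomposition step: producing a closed sub-walk of bounded \emph{average weight} (not merely of bounded length) from a walk of bounded density is what makes the argument work, and it must be combined with a careful check that the periodic extension at the seam does not create any undominated vertices in the boundary collars of width $c$ where the domination window straddles two copies of the fundamental period.
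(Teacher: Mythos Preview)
The paper does not contain a proof of this proposition: it is quoted verbatim from the author's earlier paper \cite{DomRatio} (which in turn extends \cite[Theorem~4]{IndepRatio}), and no argument is reproduced here. So there is no in-paper proof to compare your proposal against.

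That said, your outline is the standard route for results of this type and is essentially sound. The key structural idea you identify --- encode a dominating set as a bi-infinite walk in a de~Bruijn--type state graph on $\{0,1\}^c$, then extract a \emph{simple} cycle of minimum mean weight --- is exactly what makes the bound $p\le c\cdot 2^c$ fall out, since a simple cycle visits at most $2^c$ states and each transition advances the position by one (or by $c$ if you step in blocks). Two places deserve more care than you give them. First, the ``seam'' check: you should fix once and for all which window defines the state (e.g.\ $\sigma(n)=D\cap[n,n+c-1]$) and then verify explicitly that equality of $\sigma$ at the cut guarantees every vertex in the collar $[k_1c-b,\,k_1c+a-1]$ of the periodic extension is dominated; your parenthetical about ``enlarging the state slightly'' is the right instinct but should be pinned down so the constant really is $c$. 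Second, the averaging step: the clean way to get a \emph{short} cycle of low mean is not pigeonhole on an arbitrary stretch but rather the observation that any closed walk decomposes edge-disjointly into simple cycles, so the minimum mean over simple cycles is at most the mean of the whole walk; combining this with your $\epsilon\to 0$ compactness argument (finitely many candidate periodic sets) then yields exact attainment. With those two points tightened, the argument goes through.
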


This proposition has a consequence on the domination number of certain circulant digraphs.
 
\begin{proposition}\cite{DomRatio}\label{prop:finite}
Assume $S$ is a finite subset of $\ZZ\setminus\{0\}$.
Let $D$ be a dominating set of $\Gamma(\ZZ,S)$ with period $p$ such that $\dgamma(\ZZ,S) = \delta(D)= |D\cap[1,p]|/p$.
Then $D\cap[1,p]$ is a minimum dominating set of $\Gamma(\ZZ_p,S_p)$ and $\gamma(\ZZ_p,S_p) = |D\cap[1,p]| = \dgamma(\ZZ,S) p$, where $\ZZ_p:=\{1,2,\ldots,p\}$ is the cyclic group of order $p$ under addition modulo $p$ and $S_p\subseteq\ZZ_p$ consists of the least positive residues of all elements in $S$ modulo $p$. 
\end{proposition}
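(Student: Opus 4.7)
The plan is to prove the two sides of the equality $\gamma(\ZZ_p,S_p)=|D\cap[1,p]|$ separately, with the final equality $|D\cap[1,p]|=\dgamma(\ZZ,S)\,p$ coming directly from Lemma~\ref{lem:period}. The key bridge is a natural correspondence between periodic subsets of $\ZZ$ with period dividing $p$ and subsets of $\ZZ_p=\{1,2,\ldots,p\}$: to any $T\subseteq\ZZ_p$ we associate its periodic lift $\widetilde T:=\{ip+t : i\in\ZZ,\ t\in T\}$, and conversely we restrict to $[1,p]$. Throughout I will use the assumed periodicity $D=\widetilde{D\cap[1,p]}$ and the fact that reducing a sum $u+s$ modulo $p$ yields the sum in $\ZZ_p$ of the residues.

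First, I show $D\cap[1,p]$ dominates $\Gamma(\ZZ_p,S_p)$, which will give $\gamma(\ZZ_p,S_p)\le|D\cap[1,p]|$. Fix $v\in\ZZ_p$, viewed as an integer in $[1,p]$. Since $D$ dominates $\Gamma(\ZZ,S)$, there is $u\in D$ with either $u=v$ or $v-u\in S$. Reducing $u$ modulo $p$ and using the periodicity of $D$, I obtain $u':=((u-1)\bmod p)+1\in D\cap[1,p]$; then $u'=v$ or $v-u'\equiv s\pmod p$ for the corresponding $s\in S$, so $u'$ dominates $v$ in $\Gamma(\ZZ_p,S_p)$ via the residue of $s$ in $S_p$.

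Second, I prove the reverse inequality $\gamma(\ZZ_p,S_p)\ge|D\cap[1,p]|$ by lifting. Let $D^*\subseteq\ZZ_p$ be any dominating set of $\Gamma(\ZZ_p,S_p)$, and let $\widetilde{D^*}$ be its periodic lift. I claim $\widetilde{D^*}$ dominates $\Gamma(\ZZ,S)$: given $v\in\ZZ$, reduce to $v'\in[1,p]$, find a dominator $u'\in D^*$ of $v'$ in $\Gamma(\ZZ_p,S_p)$, and take $u:=v-(v'-u')$ if $u'\ne v'$ (equivalently $u:=v-s$ for the appropriate $s\in S$ with $s\bmod p=v'-u'$), or $u:=v$ otherwise; in both cases $u\in\widetilde{D^*}$ because $u\equiv u'\pmod p$. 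Applying Lemma~\ref{lem:period}, $\delta(\widetilde{D^*})=|D^*|/p$, and since $D$ achieves the domination ratio we get $|D^*|/p=\delta(\widetilde{D^*})\ge\dgamma(\ZZ,S)=\delta(D)=|D\cap[1,p]|/p$, i.e., $|D^*|\ge|D\cap[1,p]|$. Combining both inequalities yields $\gamma(\ZZ_p,S_p)=|D\cap[1,p]|=\dgamma(\ZZ,S)\,p$, with $D\cap[1,p]$ realizing the minimum.

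I do not anticipate a serious obstacle: both directions are short manipulations with the periodicity and the lift. The only subtle point is a minor bookkeeping issue about $S_p$: distinct elements of $S$ may coalesce modulo $p$, and an $s\in S$ divisible by $p$ would contribute the identity $p\in\ZZ_p$. Neither affects the proof, because in the first direction I use only that $v-u\equiv s\pmod p$ for some $s\in S$ (allowing $s\equiv 0$ just gives the trivial self-domination $u=v$ in $\ZZ_p$), and in the second direction the lift argument works equally well whether or not $S_p$ has repetitions or contains $p$. I will make a brief remark on this to reassure the reader.
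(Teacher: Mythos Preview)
The paper does not prove Proposition~\ref{prop:finite}; it is quoted from~\cite{DomRatio} and stated here without proof, so there is no in-paper argument to compare your attempt against. Your two-sided argument via the periodic lift is the natural proof and is essentially correct.

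One small slip in the lifting step: you write ``take $u:=v-(v'-u')$ if $u'\ne v'$ (equivalently $u:=v-s$ for the appropriate $s\in S$ with $s\bmod p=v'-u'$)''. These two choices are \emph{not} equivalent. The integer $v'-u'$ lies in $[1-p,p-1]$ and is only congruent modulo $p$ to some $s\in S$; it need not itself belong to $S$, so $u:=v-(v'-u')$ may fail to dominate $v$ in $\Gamma(\ZZ,S)$. The correct choice is your parenthetical one: pick $s\in S$ with $s\equiv v'-u'\pmod p$ (such $s$ exists because $u'$ dominates $v'$ via some element of $S_p$) and set $u:=v-s$. Then $u\equiv u'\pmod p$, so $u\in\widetilde{D^*}$, and $v-u=s\in S$. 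With that correction your argument goes through cleanly.
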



Now we are ready to prove Theorem~\ref{thm:RatioEff}, which allows us to know whether an integer distance graph $\Gamma(\ZZ,S)$ has an efficient dominating set by looking at its domination ratio.

\begingroup
\def\thetheorem{\ref{thm:RatioEff}}
\begin{theorem}
Let $S\subseteq\ZZ\setminus\{0\}$ with $|S|=d<\infty$.
Then the integer distance digraph $\Gamma(\ZZ,S)$ has an efficient dominating set if and only if its domination ratio is $1/(d+1)$.
\end{theorem}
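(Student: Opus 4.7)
The plan is to establish the nontrivial direction (that domination ratio $1/(d+1)$ forces the existence of an efficient dominating set) by combining Proposition~\ref{prop:periodic} with a counting argument on a single period. The other direction is Proposition~\ref{prop:basic}(iii).

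First I would invoke Proposition~\ref{prop:periodic} to obtain a periodic dominating set $D$ of $\Gamma(\ZZ,S)$ with some period $p$ such that $\delta(D) = \dgamma(\ZZ,S) = 1/(d+1)$. Lemma~\ref{lem:period} then gives $|D\cap[1,p]| = p/(d+1)$, so in particular $(d+1)\mid p$.

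Next I would define the domination count function $f(v) = |\{u\in D : v - u \in S\cup\{0\}\}|$ for $v\in\ZZ$. Since $D$ is a dominating set, $f(v)\ge1$ for every $v$, and it suffices to show $f(v)=1$ for all $v\in[1,p]$ in order to conclude that $D$ is an efficient dominating set. The key step is the identity
\[
\sum_{v=1}^{p} f(v) \;=\; \sum_{s\in S\cup\{0\}} |D\cap[1-s,p-s]| \;=\; \sum_{s\in S\cup\{0\}} |D\cap[1,p]| \;=\; (d+1)\cdot\frac{p}{d+1} \;=\; p,
\]
where the first equality is obtained by switching the order of summation and the second equality uses the periodicity of $D$ with period $p$ (so the translated windows contain the same number of elements of $D$). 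Since $f(v)\ge 1$ on $[1,p]$ and the $p$ terms sum to $p$, every $f(v)$ must equal $1$, and periodicity propagates this to all of $\ZZ$. Hence every vertex is dominated by exactly one element of $D$, making $D$ an efficient dominating set.

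There is really no serious obstacle here; the only subtlety is noting that $u, u+s_1,\ldots,u+s_d$ are genuinely $d+1$ distinct vertices (which uses $0\notin S$ and that $S$ has $d$ distinct elements), so that each $u\in D$ contributes exactly $d+1$ to the domination count, and the switch-of-summation step does not depend on where elements of $S$ lie relative to $[1,p]$ since periodicity handles the boundary effects automatically. The converse direction is immediate from Proposition~\ref{prop:basic}(iii).
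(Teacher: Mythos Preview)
Your argument is correct, and it is cleaner than the route taken in the paper. Both proofs start the same way---invoke Proposition~\ref{prop:periodic} to get a periodic dominating set $D$ of period $p$ with $|D\cap[1,p]|=p/(d+1)$---but then they diverge. The paper passes through Proposition~\ref{prop:finite} to view $D\cap[1,p]$ as a minimum dominating set of the circulant graph $\Gamma(\ZZ_p,S_p)$, uses the counting bound there to conclude that $D\cap[1,p]$ is efficient in $\ZZ_p$ and that no two elements of $S\cup\{0\}$ collide modulo $p$, and then lifts efficiency back to $\ZZ$ by checking that two dominators of the same vertex would force such a collision. Your double-counting identity $\sum_{v=1}^{p} f(v)=\sum_{s\in S\cup\{0\}}|D\cap[1-s,\,p-s]|=(d+1)\,|D\cap[1,p]|=p$ does all of this in one stroke, working entirely in $\ZZ$; periodicity of $D$ is exactly what makes each translated window $[1-s,p-s]$ contain $|D\cap[1,p]|$ points, and the pigeonhole step $f\ge1,\ \sum f=p\Rightarrow f\equiv1$ finishes it. In particular you never need to worry about whether the reduction $S\mapsto S_p$ is injective, which is the one genuinely delicate point in the paper's version. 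The paper's detour does have the side benefit of highlighting the connection to efficient domination in the finite circulant graph, but for the theorem as stated your direct count is shorter and avoids an extra proposition.
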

\addtocounter{theorem}{-1}
\endgroup

\begin{proof}
By Proposition~\ref{prop:basic} (iii), if $\Gamma(\ZZ,S)$ has an efficient dominating set, then its domination ratio must be $1/(d+1)$. 
Conversely, assume the domination ratio of $\Gamma(\ZZ,S)$ is $1/(d+1)$.
By Proposition~\ref{prop:periodic}, this ratio is achieved by a periodic dominating set $D$ with period $p$.
By Lemma~\ref{lem:period}, we have $|D\cap[1,p]|/p=1/(d+1)$.
Hence $p=(d+1)k$ where $k:=|D\cap[1,p]|\ge0$ is an integer.

By Proposition~\ref{prop:finite}, the set $D\cap[1,p]$ is a minimum dominating set of the circulant graph $\Gamma(\ZZ_p,S_p)$.
For each $n\in D\cap[1,p]$, the set of all vertices dominated by $n$ in $\Gamma(\ZZ_p,S_p)$ is 
\[ \{n+s: s\in S_p\cup\{p\}\}.\]
Since $|S_p|\le |S|=d$ and $(|S_p|+1)k \le (d+1)k = p$, the set $D\cap[1,p]$ must be an efficient dominating set of $\Gamma(\ZZ_p,S_p)$ and we must have 
\begin{equation}\label{eq:Sp}
|S_p\cup\{p\}|=|S\cup\{0\}|=d+1.
\end{equation}

To show that $D$ is an efficient dominating set of $\Gamma(\ZZ,S)$, suppose that an integer $m$ is dominated by two integers $n, n'\in D$ in $\Gamma(\ZZ,S)$, i.e., $m-n$ and $m-n'$ both belong to $S\cup\{0\}$.
It suffices to show $n=n'$.
Let $r,s,s'\in[1,p]$ be the least positive residues of $m, n, n'$ modulo $p$, respectively.
Then in the group $\ZZ_p$ we have both $r-s$ and $r-s'$ in $S_p\cup\{p\}$.
Since $D\cap[1,p]$ is an efficient dominating set of $\Gamma(\ZZ_p,S_p)$, it contains only one vertex which can dominate $r$. 
This implies $s=s'$ and thus $n\equiv n' \pmod p$.
If $n\ne n'$ then reducing the distinct elements $m-n$ and $m-n'$ of $S\cup\{0\}$ modulo $p$ gives a contradiction to the equation~\eqref{eq:Sp}.
Thus $n=n'$.
\end{proof}

\section{Lemmas on density and blocks}\label{sec:lemmas}

In this section we give some lemmas on density and blocks, which will used in our proof of Theorem~\ref{thm:main}.

Let $f:\ZZ\to\RR$ be a function.
For each nonempty finite set $A\subseteq\ZZ$ we write 
\[ \|f(A)\| := \sum_{a\in A} f(a).\] 
Define the \emph{density} of $f$ to be
\[ \delta(f):=\liminf_{n\to\infty} \frac{ \|f(\ZZ\cap[-n,n])\| }{2n+1}.\]
In particular, for any subset $U\subseteq \ZZ$, let $f=\chi_{\ds U}:\ZZ\to\RR$ be the function defined by 
\[ \chi_{\ds U}(i):=\begin{cases}
1, & i\in U,\\
0, & i\in\ZZ\setminus U.
\end{cases}\]
We define the \emph{density of $U$ in $\ZZ$} to be
\[ \delta(U) := \delta(\chi_{\ds U}) = \liminf_{n\to\infty}  \frac{|U\cap [-n,n]|}{2n+1}. \]
This agrees with the earlier definition~\eqref{eq:density} for the density of a subset of $\ZZ$.

Extending a previous result on the independence ratio~\cite[Lemma 17]{IndepRatio}, the next lemma shows that the density of a subset of $\ZZ$ can be calculated not only by looking over the intervals of the form $[-n,n]$, but also by looking at multiples of these intervals and by making small bounded changes at both ends.

\begin{lemma}\cite{DomRatio}\label{lem:density}
Fix two positive integers $d$ and $N$.
Suppose that $f:\ZZ\to\RR$ satisfies $f(i)\ge0$ for all $i\in \ZZ$.
Let $(\ell_m)$ and $(r_m)$ be two sequences of integers contained in the interval $[-N,N]$.
Then
\[ \delta(f) = \liminf_{m\to\infty} \frac{\| f(\ZZ\cap[-md-\ell_m,md+r_m]) \|}{2md+\ell_m+r_m+1}. \]
\end{lemma}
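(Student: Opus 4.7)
The plan is to compare $\|f(\cdot)\|$ along the symmetric intervals defining $\delta(f)$ and along the perturbed intervals on the right-hand side, exploiting that the cumulative sum is monotone when $f\ge 0$. Set $b_n := \|f(\ZZ\cap[-n,n])\|$ and $a_n := b_n/(2n+1)$, so that $b_n$ is non-decreasing in $n$ and $\delta(f)=\liminf_n a_n$.

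First I would establish a \emph{bounded-gap subsequence} sublemma: if $(n_m)$ is a strictly increasing sequence of positive integers with $n_{m+1}-n_m\le C$ for some constant $C$, then $\liminf_m a_{n_m} = \delta(f)$. The inequality $\liminf_m a_{n_m}\ge\delta(f)$ is immediate from the subsequence property. For the reverse, given any $n$ with $a_n$ near $\delta(f)$, choose $m$ with $n_m\le n<n_{m+1}$; monotonicity of $b$ gives $b_{n_m}\le b_n$, hence $a_{n_m}\le a_n\cdot (2n+1)/(2n_m+1)$, and the correction ratio is $1+o(1)$ because $n-n_m<C$ while $n_m\to\infty$.

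Next comes the main sandwich. For $m$ large enough that $md>N$, the hypothesis $\ell_m, r_m\in[-N,N]$ forces
\[ [-(md-N),\,md-N] \;\subseteq\; [-md-\ell_m,\,md+r_m] \;\subseteq\; [-(md+N),\,md+N], \]
so $f\ge 0$ and the monotonicity of $b$ yield $b_{md-N}\le \|f(\ZZ\cap[-md-\ell_m,md+r_m])\|\le b_{md+N}$. Dividing by $2md+\ell_m+r_m+1$ and renormalizing each outer term to isolate $a_{md\pm N}$ gives the sandwich
\[ a_{md-N}\cdot\frac{2md-2N+1}{2md+\ell_m+r_m+1} \;\le\; \frac{\|f(\ZZ\cap[-md-\ell_m,md+r_m])\|}{2md+\ell_m+r_m+1} \;\le\; a_{md+N}\cdot\frac{2md+2N+1}{2md+\ell_m+r_m+1}, \]
and both outer correction factors tend to $1$ as $m\to\infty$ because $|\ell_m+r_m|\le 2N$.

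To conclude, I would apply the sublemma to the two arithmetic progressions $n_m=md-N$ and $n_m=md+N$, each with constant gap $d$, obtaining $\liminf_m a_{md-N} = \liminf_m a_{md+N} = \delta(f)$. Together with the identity $\liminf_m(c_mx_m)=(\lim c_m)(\liminf x_m)$, valid when $c_m\to c>0$ and $x_m\ge 0$, the two outer liminfs both equal $\delta(f)\cdot 1=\delta(f)$, and the sandwich pins the middle liminf to the same value. The main technical point I anticipate is justifying this last $\liminf$-product identity cleanly in case $a_{md+N}$ is unbounded, but a short $\varepsilon$-argument from the definition of $\liminf$ handles it; everything else is bookkeeping around the endpoint perturbations.
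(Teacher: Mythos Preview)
This lemma is not proved in the present paper; it is quoted from the author's earlier work \cite{DomRatio}, so there is no in-paper argument to compare against. Your proposal is nonetheless a correct proof. The bounded-gap subsequence sublemma is exactly the right device: monotonicity of $n\mapsto b_n$ (from $f\ge 0$) lets you show $\liminf_m a_{n_m}=\delta(f)$ along any arithmetic progression, and the two-sided sandwich between the symmetric windows $[-(md\pm N),md\pm N]$ then absorbs the bounded endpoint perturbations $\ell_m,r_m$. The $\liminf$-product identity $\liminf_m(c_m x_m)=c\,\liminf_m x_m$ for $c_m\to c>0$, $x_m\ge 0$ that you flag is routine (and when $\liminf x_m=\infty$ both sides are infinite, so there is no issue). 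One cosmetic point: your sublemma argument ``pick $n$ with $a_n$ near $\delta(f)$'' tacitly assumes $\delta(f)<\infty$; simply note that the case $\delta(f)=\infty$ is immediate from the subsequence inequality $\liminf_m a_{n_m}\ge\delta(f)$ alone.
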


We will need the following lemma in the proof of Theorem~\ref{thm:main}.

\begin{lemma}\cite{DomRatio}\label{lem:toggle}
If $U\subseteq \ZZ$ and $f:\ZZ\to\RR$ satisfy all of the following conditions, then $\delta(U) = \delta(f)$.
\begin{enumerate}
\item
The set $\ZZ$ is the disjoint union of finite nonempty subsets $A_i$ for $i$ in some index set $I$.
\item\label{item:AiW}
There exists a constant $b$ such that $\max A_i-\min A_i\le b$ for all $i\in I$.
\item\label{item:AiU}
For each $i\in I$ we have $|U\cap A_i| = \|f(A_i)\|$. 
\item\label{item:f}
There exists a constant $N>0$ such that $0\le f(j)\le N$ for all $j\in\ZZ$.
\end{enumerate}
\end{lemma}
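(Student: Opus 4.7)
My plan is to show that the functions $n\mapsto |U\cap[-n,n]|$ and $n\mapsto\|f(\ZZ\cap[-n,n])\|$ differ by a bounded amount for all $n$, so that after dividing by $2n+1$ and taking $\liminf$, the two densities must coincide.

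First I would split the integers inside $[-n,n]$ according to which block $A_i$ they belong to. By condition (i), $\{A_i\}_{i\in I}$ partitions $\ZZ$, so the sets $A_i\cap[-n,n]$ partition $[-n,n]$. Call an index $i$ \emph{interior} if $A_i\subseteq[-n,n]$, and \emph{boundary} otherwise, and let $I_n$ denote the set of interior indices. For each $i\in I_n$, condition (iii) gives $|U\cap A_i|=\|f(A_i)\|$, so summing over $I_n$ these contributions to $|U\cap[-n,n]|$ and to $\|f(\ZZ\cap[-n,n])\|$ cancel exactly.

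Next I would bound the boundary contributions uniformly in $n$. By condition (ii), any boundary block is contained in $[-n-b,-n+b]\cup[n-b,n+b]$, which has at most $4b+2$ integers; in particular the total number of elements of $\ZZ$ lying in a boundary block and also in $[-n,n]$ is at most $4b+2$. Using $0\le\chi_{\ds U}\le 1$ and condition (iv), each such element contributes at most $1$ to $|U\cap[-n,n]|$ and at most $N$ to $\|f(\ZZ\cap[-n,n])\|$, so
\[\bigl|\,|U\cap[-n,n]|-\|f(\ZZ\cap[-n,n])\|\,\bigr|\le (4b+2)(1+N),\]
a constant independent of $n$. Dividing by $2n+1$ and taking $\liminf_{n\to\infty}$ then yields $\delta(U)=\delta(f)$.

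The only subtlety is cleanly controlling the boundary term; the uniform block-size bound in (ii) together with the boundedness of $f$ in (iv) makes this quick, while condition (i) ensures the partition is genuine so that the interior sums line up as claimed. I do not expect any serious obstacle beyond careful bookkeeping of the finitely many boundary blocks near $\pm n$.
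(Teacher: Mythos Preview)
Your argument is correct. The paper itself does not prove this lemma; it is quoted from the author's earlier work~\cite{DomRatio}, so there is no in-paper proof to compare against. That said, your approach---partition $[-n,n]$ into interior blocks (where condition~(iii) forces exact cancellation between $\chi_{\ds U}$ and $f$) and boundary blocks (whose total size is bounded independently of $n$ via~(ii), and whose pointwise contributions are bounded via~(iv))---is precisely the natural one, and is almost certainly what the cited proof does as well. Your boundary estimate $4b+2$ is slightly loose (one can get $2b+2$ once $n>b$, since a boundary element of $[-n,n]$ must lie in $[-n,-n+b]\cup[n-b,n]$), but any uniform constant suffices for the conclusion. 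The final step, that $|a_n-b_n|\le C$ implies $\liminf a_n/(2n+1)=\liminf b_n/(2n+1)$, is routine.
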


Let $S$ be a finite nonempty subset of $\ZZ\setminus\{0\}$.
One can write a dominating set of the integer distance graph $\Gamma(\ZZ,S)$ as $D=\{ x_i: i\in\ZZ \}$, where $x_i< x_{i+1}$ for all $i\in\ZZ$. 
This gives rise to a partition of $\ZZ$ into a disjoint union of \emph{blocks} $B_i:=\{ x_i, x_i+1,\ldots, x_{i+1}-1\}$ for all $i\in \ZZ$.
The size of a block $B_i$ is $b_i:=|B_i| = x_{i+1}-x_i$ and we call $B_i$ a \emph{$b_i$-block}.
The \emph{block structure} of a union of consecutive blocks is defined as the sequence of sizes of the blocks in this union. 
We may identify the dominating set $D$ with the block structure of the above partition of $\ZZ$, since $D$ is determined by this block structure up to a translation. 
If $D$ has period $p$ then we can represent $D$ by a finite sequence $(b_1,\ldots,b_\ell)$ repeated infinitely in both directions, where $b_1,\ldots,b_\ell$ are positive integers satisfying $b_1+\ldots+b_\ell=p$.
For instance, the block structure $(3^5, 4, 3^5, 1)$ corresponds to $12$ consecutive blocks, first with five $3$-blocks, then a $4$-block, then five $3$-blocks again, and finally a $1$-block.
Repeating this block structure infinitely in both directions gives the block structure $(3^5,4,3^5,1)^\infty$, which determines a periodic dominating set up to a translation.

Given integers $d\ge2$ and $s\notin[0,d-2]$, we can use block structures to construct dominating sets for the graph $\Gamma(\ZZ,\{1,2,\ldots,d-2,s\})$ and establish upper bounds for its domination ratio.
One sees that if $s\equiv-1 \pmod d$ then $\Gamma(\ZZ,\{1,2,\ldots,d-2,s\})$ has an efficient domination set with block structure $d^\infty$ (an example of such a set consists of all multiples of $d$) and hence has domination ratio $1/d$ (cf. Proposition~\ref{prop:basic} (iv)). 
The next lemma deals with the remaining cases.

\begin{lemma}\label{lem:1s}
Let $d\ge2$ and $s\notin[0,d-2]$ be integers.
Write $s=dk+e-1$ or $s=-dk+d-e-1$ for some integers $k\ge1$ and $e\in\{1,\ldots,d-1\}$.
Then 
\begin{align}
\dgamma(\ZZ,\{1,2,\ldots,d-2,s\}) \label{eq:upper}
&\le \min \left\{ \frac{k+1}{dk+e},\ \frac{2k+e-1}{2dk-d+2e},\ \frac{1}{d-1} \right\} \\
&=\begin{cases} 
(k+1)/(dk+e) & \text{if } e\ge2,\ d\le k+e+1 \\ 
(2k+e-1)/(2dk-d+2e) & \text{if } e=1,\ d\le 2k+2 \\
1/(d-1) & \text{otherwise}.
\end{cases}\label{eq:cases}
\end{align}
\end{lemma}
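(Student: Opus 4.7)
The plan is to establish the inequality~\eqref{eq:upper} by exhibiting, for each of the three candidate values on the right, a periodic dominating set of $\Gamma(\ZZ,\{1,2,\ldots,d-2,s\})$ of exactly that density; the identity~\eqref{eq:cases} will then follow from a pairwise comparison of those candidates. By Lemma~\ref{lem:period} it suffices to specify each set by its block structure and to check that every integer is either a $D$-vertex, within distance $d-2$ to the right of the preceding $D$-vertex, or equals $v+s$ for some $v\in D$.

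The first construction is $(d-1)\ZZ$, with block structure $(d-1)^\infty$: each $u\in(d-1)\ZZ$ dominates $\{u,u+1,\ldots,u+d-2\}$ through the short edges $\{0,1,\ldots,d-2\}$, independent of $s$, giving $\dgamma\le 1/(d-1)$. The second construction $D_1$ has block structure $(e,d^k)^\infty$, period $p_1=dk+e$, and $k+1$ vertices per period. The only coverage obligations beyond the short edges are the last vertices of the $d$-blocks, and I will dispatch these using the congruences $s\equiv -1\pmod{p_1}$ (for $s=dk+e-1$) and $s\equiv d-1\pmod{p_1}$ (for $s=-dk+d-e-1$): in each case the covering $D$-vertex is read off directly from the congruence. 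This yields $\dgamma\le (k+1)/(dk+e)$. The third construction $D_2$, needed only when $e=1$, has block structure $(1,d^{k-1},d+1,d^{k-1})^\infty$, period $p_2=2dk-d+2$, and $2k$ vertices per period. Both sign-cases for $s$ collapse here because $s\equiv dk\pmod{p_2}$ for either form; the $d$-blocks are handled exactly as in $D_1$, and the two extra vertices inside the unique $(d+1)$-block will be shown to be covered via the $s$-edge by the two $D$-vertices at positions $0$ and $1$ of the period (the start of the size-$1$ block and of the adjacent size-$d$ block). This yields $\dgamma\le 2k/(2dk-d+2)=(2k+e-1)/(2dk-d+2e)$ when $e=1$; for $e\ge 2$ a quick cross-multiplication shows $(k+1)/(dk+e)\le(2k+e-1)/(2dk-d+2e)$, so the bound $(2k+e-1)/(2dk-d+2e)$ is already implied by $D_1$ in that regime.

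For the second equality~\eqref{eq:cases}, I will just compare each of the three quantities with $1/(d-1)$: routine algebra gives $(k+1)/(dk+e)\le 1/(d-1)\Leftrightarrow d\le k+e+1$ and $(2k+e-1)/(2dk-d+2e)\le 1/(d-1)\Leftrightarrow d\le 2k+2$, and the $e=1$ vs.\ $e\ge 2$ split above pins down which of the first two quantities is the smaller. The main obstacle will be verifying the dominating property of $D_2$ when $e=1$: one must track, modulo $p_2$, exactly which $D$-vertex covers each of the two extra vertices in the single $(d+1)$-block via the $s$-edge, and confirm that placing the size-$1$ block at the front of the period is precisely what makes both of those coverings succeed simultaneously.
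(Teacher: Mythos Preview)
Your proposal is correct and follows essentially the same route as the paper: the paper exhibits the three periodic dominating sets with block structures $(d-1)^\infty$, $(d^k,e)^\infty$, and $(d^{k-1},d+e,d^{k-1},1^{e})^\infty$ (cyclic shifts of yours), and then performs the same pairwise comparisons to obtain~\eqref{eq:cases}. Your only deviation is to build the third set solely for $e=1$ and recover the bound $(2k+e-1)/(2dk-d+2e)$ for $e\ge 2$ from the second construction via the inequality $(k+1)/(dk+e)\le(2k+e-1)/(2dk-d+2e)$; this is a harmless shortcut, and your more explicit verification of the $s$-edge coverage (which the paper compresses into ``One sees that\ldots'') is fine.
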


\begin{proof}
One sees that $\Gamma(\ZZ,\{1,2,\ldots,d-2,s\})$ has three periodic dominating sets determined by the block structures $(d^k,e)^\infty$, $(d^{k-1}, d+e, d^{k-1}, 1^{e})^\infty$, and $(d-1)^\infty$, respectively.
This gives the desired upper bound~\eqref{eq:upper} for the domination ratio $\dgamma(\ZZ,\{1,2,\ldots,d-2,s\})$.
One can check that 
\begin{itemize}
\item
$(k+1)/(dk+e) \le (2k+e-1)/(2dk-d+2e)$ if and only if $d-e+(dk+e)(e - 2)\ge0$,
\item
$(k+1)/(dk+e) \le 1/(d-1)$ if and only if $d\le k+e+1$, and
\item
$(2k+e-1)/(2dk-d+2e) \le 1/(d-1)$ if and only if $(d-3)e+1\le 2k$.
\end{itemize}
If $e\ge 2$ then $d-e+(dk+e)(e - 2)\ge0$.
If $e=1$ then $d-e+(dk+e)(e - 2) = d(1-k)-2<0$.
Thus the equality~\eqref{eq:cases} holds.
\end{proof}

To establish the equality in the upper bound~\eqref{eq:upper}, we need another lemma on blocks.

\begin{lemma}\label{lem:block}
Let $D=\{x_i:i\in\ZZ\}$ be a dominating set of $\Gamma(\ZZ,\{1,2,\ldots,d-2,s\})$, where $d\ge2$ and  and $s\notin[0,d-2]$ are integers.
Then the following statements hold for every $i\in \ZZ$.
\begin{enumerate}
\item\label{item:size}
The size of $B_i$ satisfies $1\le b_i\le s+1$ if $s>0$ or $1\le b_i\le -s+d-1$ if $s<0$.
\item\label{item:singleton}
If $b_i\ge d$ then $D$ contains $x_i-s+d-1, x_i-s+d,\ldots,x_i-s+b_i-1$.
\end{enumerate}
\end{lemma}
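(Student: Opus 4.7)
The plan is to use the basic characterization that a vertex $v$ of $\Gamma(\ZZ,\{1,2,\ldots,d-2,s\})$ is dominated by $u \in D$ if and only if $v - u \in S \cup \{0\} = \{0,1,\ldots,d-2,s\}$, combined with the fact that each block $B_i$ contains exactly one element of $D$, namely $x_i$. Consequently, any dominator $u \in D$ of a vertex $v \in B_i$ with $u \neq x_i$ must lie outside $B_i$, so either $u \leq x_{i-1}$ or $u \geq x_{i+1}$.

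I would prove part~(\ref{item:singleton}) first, since it is the more structural statement and the upper bound in part~(\ref{item:size}) will then drop out almost for free. Fix $j$ with $d - 1 \leq j \leq b_i - 1$ and set $v := x_i + j \in B_i$. A dominator $u \in D$ of $v$ satisfies $v - u \in \{0,1,\ldots,d-2,s\}$. If $v - u \in \{0, 1, \ldots, d-2\}$, then $u \in [v - d + 2, v] \subseteq [x_i + 1, x_{i+1} - 1]$, which contains no element of $D$; this is a contradiction. Hence $v - u = s$, forcing $u = x_i + j - s \in D$. Letting $j$ range over $\{d - 1, d, \ldots, b_i - 1\}$ yields exactly the claimed elements $x_i - s + d - 1, \ldots, x_i - s + b_i - 1$ of $D$.

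For part~(\ref{item:size}), the inequality $b_i \geq 1$ is immediate from $b_i = x_{i+1} - x_i$. For the upper bound I would split on the sign of $s$. When $s > 0$, the hypothesis $s \notin [0, d-2]$ gives $s \geq d - 1$, so every element of $S \cup \{0\}$ is nonnegative; taking $v := x_{i+1} - 1$, any dominator $u \in D$ satisfies $v - u \geq 0$, hence $u \leq x_i$ and $v - u \geq b_i - 1$, while the largest element of $\{0, 1, \ldots, d-2, s\}$ is $s$, giving $b_i - 1 \leq s$. When $s < 0$, the case $b_i \leq d - 1$ is trivial since $d - 1 \leq -s + d - 1$; otherwise $b_i \geq d$, and applying part~(\ref{item:singleton}) with $j = d - 1$ produces an element $x_i + d - 1 - s$ of $D$ strictly greater than $x_i$, which must therefore be at least $x_{i+1}$, yielding $b_i \leq -s + d - 1$.

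The argument is essentially routine once the crucial observation is in place, namely that a vertex at distance at least $d - 1$ from the left endpoint of its block cannot be dominated via any of the ``small'' steps $\{1, \ldots, d - 2\}$ and is therefore forced to use the special step $s$. The only mildly delicate point is that the two signs of $s$ must be treated separately, because they determine whether the forced dominator $x_i + j - s$ lies to the left or to the right of the block $B_i$, which in turn governs which of the two inequalities in part~(\ref{item:size}) is the operative one.
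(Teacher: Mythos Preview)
Your proof is correct and follows essentially the same approach as the paper's: both arguments hinge on the observation that a vertex $x_i+j$ with $d-1\le j\le b_i-1$ cannot be dominated via any of the short steps $0,1,\ldots,d-2$, forcing its dominator to be $x_i+j-s$, and both then deduce the size bound in part~(\ref{item:size}) from this forced membership. The only cosmetic difference is that for $s>0$ you bound $b_i$ directly by looking at a dominator of $x_{i+1}-1$, whereas the paper invokes part~(\ref{item:singleton}) in both sign cases; the content is the same.
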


\begin{proof}
The result is trivial when $b_i\le d-1$.
Assume $b_i\ge d$ below.
Then $D$ contains the elements $x_i-s+d-1, x_i-s+d,\ldots,x_i-s+b_i-1$ in order to dominate $x_i+d-1,x_i+d,\ldots,x_i+b_i-1$.
This implies $x_i-s+b_i-1\le x_i$ when $s>0$ or $x_i+b_i \le x_i-s+d-1$ when $s<0$.
The result follows.
\end{proof}

\section{Proof of Theorem~\ref{thm:main}}\label{sec:main}

In this section we precisely determine the domination ratio of the integer distance graph $\Gamma(\ZZ,\{1,2,\ldots,d-2,s\})$ for any integers $d\ge2$ and $s\notin[0,d-2]$, and give some corollaries of this result.
By Proposition~\ref{prop:basic} (iv), we may assume $s\not\equiv-1\pmod d$.

\begingroup
\def\thetheorem{\ref{thm:main}}
\begin{theorem}
Let $d$ and $s$ be integers with $d\ge2$ and $s\notin[0,d-2]$.
Write $s=dk+e-1$ or $s=-dk+d-e-1$ for some integers $k\ge1$ and $e\in\{1,\ldots,d-1\}$.
Then
\begin{align*}
\dgamma(\ZZ,\{1,2,\ldots,d-2,s\}) 
&=\min \left\{ \frac{k+1}{dk+e},\ \frac{2k+e-1}{2dk-d+2e},\ \frac{1}{d-1} \right\} \\
&=\begin{cases} 
(k+1)/(dk+e) & \text{if } e\ge2,\ d\le k+e+1 \\ 
(2k+e-1)/(2dk-d+2e) & \text{if } e=1,\ d\le 2k+2 \\
1/(d-1) & \text{otherwise}.
\end{cases}
\end{align*}
\end{theorem}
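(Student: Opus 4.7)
The upper bound is Lemma~\ref{lem:1s}, so I focus on the matching lower bound. By Proposition~\ref{prop:periodic} I may take a periodic dominating set $D = \{x_i : i \in \ZZ\}$ achieving the ratio, set $B_i = \{x_i, \ldots, x_{i+1}-1\}$ and $b_i = x_{i+1} - x_i$, and observe via Lemma~\ref{lem:block}(i) that $b_i \in \{1, \ldots, dk+e\}$, so $\delta(D)$ is the reciprocal of the asymptotic average of the $b_i$. Writing $\rho^* := \min\{(k+1)/(dk+e),\ (2k+e-1)/(2dk-d+2e),\ 1/(d-1)\}$, the plan is to prove $\delta(D) \ge \rho^*$ by applying Lemma~\ref{lem:toggle} to an appropriate partition of $\ZZ$ together with a carefully chosen weight function $f : \ZZ \to \RR$.

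The rigid forcing from Lemma~\ref{lem:block}(ii) is the essential input: each block $B_i$ of size $b \ge d$ makes $b - d + 1$ consecutive integers lie in $D$ at a shift of $-s$ from $x_i$, and in particular produces $b - d$ adjacent size-$1$ blocks uniquely pinned to $B_i$ by the offset $s$. So each ``big'' block must be balanced against a corresponding run of $1$-blocks somewhere else. The construction I have in mind pairs each big block (size $\ge d$) with the chain of $1$-blocks it forces and defines $f$ to distribute the $D$-mass uniformly over the resulting ``super-block'' of total length $2b - d$ containing $b - d + 1$ elements of $D$, while small blocks (size $\le d - 1$) form their own super-blocks. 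One then checks that each small super-block of size $b \le d-1$ carries local density $1/b \ge 1/(d-1)$, a super-block built from a size-$b$ big block carries local density $(b - d + 1)/(2b - d)$, and, where the pairing leaves a bare $d$-block with zero forced $1$-blocks, that block is absorbed into an adjacent super-block to boost its density to $\ge \rho^*$. Minimizing the resulting local densities against $1/(d-1)$ reproduces exactly the three-way minimum defining $\rho^*$ via the same elementary inequalities as in the proof of Lemma~\ref{lem:1s}; once $f \ge \rho^*$ pointwise, Lemma~\ref{lem:toggle} yields $\delta(D) = \delta(f) \ge \rho^*$.

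The main obstacle is the bookkeeping needed to make this weight transfer consistent across all block structures compatible with Lemma~\ref{lem:block}. The extremal configurations saturating each of the three candidate ratios correspond to the explicit patterns $(d^k, e)^\infty$, $(d^{k-1}, d+e, d^{k-1}, 1^e)^\infty$, and $(d-1)^\infty$ exhibited in Lemma~\ref{lem:1s}, and each must arise as a limit case of the weight argument without slack. In particular, a stand-alone $d$-block has local density only $1/d$, strictly less than $\rho^*$ in the main regime, so the weight function must transfer some mass onto that $d$-block from a neighbor---typically a small block whose start is pinned to $x_i - s + d - 1$ by the forcing constraint. Disentangling which neighbor is charged to which parent, ensuring no neighbor is charged twice, and keeping separate accounts for $s > 0$ versus $s < 0$ and for $e = 1$ versus $e \ge 2$ is precisely what the authors describe as ``more complex and requires a more detailed case-by-case analysis'' compared with the $d = 3$ proof of Theorem~\ref{thm:1s}.
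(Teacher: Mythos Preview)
Your high-level strategy matches the paper's: establish the upper bound via Lemma~\ref{lem:1s}, then for the lower bound pair each big block $B_i$ (size $b_i\ge d$) with the $b_i-d$ forced $1$-blocks from Lemma~\ref{lem:block}~\eqref{item:singleton}, and invoke Lemma~\ref{lem:toggle}. But two of your concrete choices diverge from the paper and leave a real gap.

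First, the paper does \emph{not} spread mass uniformly over a super-block. It fixes the specific weight $w=(d-2)/(2d-2)$, sets $f(x_i)=w(b_i-d)+1$ on the big block and $f=1-w$ on each forced $1$-block, and keeps small blocks at $f=1$. This $f$ is \emph{not} $\ge\rho^*$ pointwise, so your sentence ``once $f\ge\rho^*$ pointwise, Lemma~\ref{lem:toggle} yields $\delta(D)=\delta(f)\ge\rho^*$'' does not describe a workable endgame. What the paper does instead is a second layer of grouping: after getting $\delta(D)=\delta(f)$ from Lemma~\ref{lem:toggle}, it covers $\ZZ$ by \emph{clusters} $C_r$ that are genuine intervals of length at most $2dk-d+2e$ and proves $\|f(C_r)\|/|C_r|\ge\rho^*$ for each, then finishes with Lemma~\ref{lem:density}.

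Second, and this is the substantive gap, your handling of bare $d$-blocks (``absorb into an adjacent super-block'') does not survive chains. A $d$-block $B_r$ forces only the single element $x_r-s+d-1\in D$ and zero $1$-blocks, and the block starting there may itself be a $d$-block, which forces another, and so on; the extremal pattern $(d^k,e)^\infty$ is exactly such a chain. There is no ``adjacent super-block'' available to donate mass, because every neighbor in the chain is in the same deficit. The paper resolves this by taking the cluster to be the full interval $C'_r=[x_r-s+d-1,\,x_r+b_r-1]$ (or its mirror when $s<0$), counting \emph{all} blocks inside it with multiplicities $m_\ell$, and splitting into cases according to whether $C'_r$ contains a block of size $\le d-1$; when it does not, the cluster is enlarged once more to an interval of length $2dk-d+2e$. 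The inequalities in Cases~2.1.1, 2.1.2.1, 2.1.2.2, and 2.2 are where the three candidate ratios actually appear, and the specific choice of $w$ is what makes those inequalities close up. That interval-level accounting, not the super-block pairing alone, is the missing idea; calling it ``bookkeeping'' undersells it.
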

\addtocounter{theorem}{-1}
\endgroup

\begin{proof}
We may assume $d\ge4$ as the case $2\le d\le 3$ was solved in our previous work~\cite{DomRatio}.
By Lemma~\ref{lem:1s}, it suffices to show that the desired domination ratio is a lower bound. 
Let $D=\{x_i:i\in\ZZ\}$ be an arbitrary dominating set of $\Gamma(\ZZ,\{1,2,\ldots,d-2,s\})$, where $x_i<x_{i+1}$ for all $i\in \ZZ$.
The set $D$ partitions $\ZZ$ into a disjoint union of blocks $B_i:=\{x_i,x_i+1,\ldots,x_{i+1}-1\}$ for all $i\in \ZZ$.
By Lemma~\ref{lem:block}~\eqref{item:size}, we have $b_i:=|B_i| \le dk+e$.

We next define a function $f:\ZZ\to\RR$ and partition $\ZZ$ into a disjoint union of finite subsets $A_i\ne\emptyset$ for all $i$ in some index set $I$ in such a way that Lemma~\ref{lem:toggle} applies and gives $\delta(D)=\delta(f)$.

\vskip3pt\noindent\textsf{Step 1}. 
We first define $f(x) := 0$ for all $x\in \ZZ\setminus D$ and initiate $I:=\emptyset$.

\vskip3pt\noindent\textsf{Step 2}. 
For each $i\in\ZZ$ with $b_i\le d-1$ we define $f(x_i):=1$, insert $i$ into $I$, and set $A_i:=B_i$.
Then
\[ \max A_i - \min A_i = b_i-1 \le d-2 \quad\text{and}\quad \|f(A_i)\| = 1 = |A_i\cap D|.\]
Since the blocks are pairwise disjoint, at the end of this step we have a disjoint union of $A_i$ for all $i\in I$, which equals the union of all blocks of size less than $d$.

\vskip3pt\noindent\textsf{Step 3}. 
For each $i\in \ZZ$ with $d\le b_i\le dk+e$ we insert $i$ into $I$ and define
\[ w:=\frac{d-2}{2d-2}\in \left(\frac13, \frac12\right) \quad\text{and}\quad
f(x_i) := w(b_i-d)+1 \le \frac{dk+1}2. \]
By Lemma~\ref{lem:block}~\eqref{item:singleton}, there are $b_i-d$ consecutive 1-blocks 
\[ B_j=\{ x_i-s+d-1 \},\ B_{j+1}=\{x_i-s+d\},\ \ldots,\ B_{j+b_i-d-1}=\{x_i-s+b_i-2\}.\]
Let $A_i$ be the union of these 1-blocks together with $B_i$.
Lemma~\ref{lem:block}~\eqref{item:size} implies that
\[ \max A_i - \min A_i = \begin{cases}
(x_i+b_i-1) - (x_i-s+d-1) \le 2dk-d+2e-1 & \text{if } s=dk+e-1, \\
(x_i-s+b_i-2) - x_i \le  2dk-d+2e-1 & \text{if } s=-dk+d-e-1.
\end{cases}\]
Delete $j+h$ from $I$ and redefine $f(x_{j+h}):=1-w$ for all $h=0,1,\ldots,b_i-d-1$. We have
\[ \|f(A_i)\| = (1-w)(b_i-d) + w(b_i-d)+1 = b_i-d+1 = |A_i\cap D|.\]
Since we delete all $1$-blocks included in any set $A_i$ defined in this step, at the end we still have a disjoint union of $A_i$ for all $i\in I$, and this union equals $\ZZ$ as we include all the blocks.

Now for every $i\in\ZZ$ the nonempty set $A_i$ satisfies
\[ \max A_i - \min A_i \le 2dk-d+2e-1 \quad\text{and}\quad \|f(A_i)\| = |A_i\cap D|. \]
For each $x\in\ZZ$ we have 
$ 0\le f(x)\le (dk+1)/2.$
Thus Lemma~\ref{lem:toggle} gives $\delta(D) = \delta(f)$.

It remains to show that the desired domination ratio is a lower bound for $\delta(f)$.
For each integer $n>0$, let $x_r$ be the largest element of $D$ such that $B_r\subseteq(-\infty,n]$ if $s=dk+e-1$, or the smallest element of $D$ such that $B_r\subseteq[-n,\infty)$ if $s=-dk+d-e-1$.
We distinguish some cases below to define a cluster $C_r$ to be a union of certain blocks (including $B_r$) such that
\begin{equation}\label{eq:ClusterDensity}
\frac{\|f(C_r)\|}{|C_r|} \ge \min \left\{ \frac{k+1}{dk+e},\ \frac{2k+e-1}{2dk-d+2e},\ \frac{1}{d-1} \right\}.
\end{equation}

\vskip3pt\noindent\textsf{Case 1}.
If $b_r\le d-1$ then we define a cluster $C_r:=B_r$.
We have $1/2< 1-w\le \|f(C_r)\|\le 1$ when $|C_r|=1$ and $\|f(C_r)\|=1$ when $2\le |C_r|\le d-1$.
Thus $\|f(C_r)\| / |C_r| \ge 1/(d-1)$.

\vskip3pt\noindent\textsf{Case 2}.
If $d\le b_r\le dk+e$ then $D$ contains $x_r-s+d-1, x_r-s+d, \ldots,x_r-s+b_r-1$ 
by Lemma~\ref{lem:block}~\eqref{item:singleton}.
Define
\[ C'_r:=\begin{cases}
[x_r-s+d-1,x_r+b_r-1], & \textrm{if } s=dk+e-1, \\
[x_r,x_r-s+b_r-2], & \textrm{if } s=-dk+d-e-1.
\end{cases} \]
Then $C'_r$ is the disjoint union of all blocks contained in it, and its size is $|C'_r| = b_r+dk-d+e$.
Let $m_\ell$ be the number of blocks of size $\ell$ in $C'_r$ for $1\le\ell\le dk+e$.
We have $m_1\ge b_r-d$ and
\begin{equation}\label{eq:ClusterSize}
m_1+2m_2+\cdots+(dk+e)m_{dk+e} = b_r+dk-d+e.
\end{equation}

\vskip3pt\noindent\textsf{Case 2.1}.
Assume $m_1+\cdots+m_{d-1}\ge1$.
Define a cluster $C_r:=C'_r$ whose size is 
\[ |C_r| = |C'_r| = b_r+dk-d+e \le 2dk-d+2e .\]
To show that $\|f(C_r)\|/|C_r|$ satisfies the lower bound~\eqref{eq:ClusterDensity}, we further distinguish two subcases.

\vskip3pt\noindent\textsf{Case 2.1.1}.
Assume $m_1+m_{d+1}+2m_{d+2}+\cdots+(dk-d+e)m_{dk+e} \le b_r-d+e-1$. 
This implies 
\[ 2m_2+3m_3+\cdots+dm_d+dm_{d+1}+\cdots+dm_{dk+e} \ge dk+1 \]
by the equation~\eqref{eq:ClusterSize}.
Thus $m_2+\cdots+m_{dk+e}\ge k+1$ and
\begin{align*}
\|f(C_r)\| \ge (1-w)m_1 + m_2 + m_3 + \cdots + m_{dk+e} \ge (b_r-d)/2 + k+1.
\end{align*}
It follows that
\[ \frac{\|f(C_r)\|}{|C_r|} \ge \frac{(b_r-d)/2 + k+1}{b_r+dk-d+e} \ge \frac{k+1}{dk+e} \]
where the last inequality holds by the following calculation: 
\begin{align*}
& ((b_r-d)/2 + k+1)(dk+e)-(b_r+dk-d+e)(k+1) \\ 
= & (b_r-d)((d-2)k+e-2)/2 \ge0.
\end{align*}
Hence the inequality~\eqref{eq:ClusterDensity} holds in this case.

\vskip3pt\noindent\textsf{Case 2.1.2}.
Assume $m_1+m_{d+1}+2m_{d+2}+\cdots+(dk-d+e)m_{dk+e} \ge b_r-d+e$.
This implies that
\[ 2m_2+\cdots+dm_d+d m_{d+1}+\cdots+d m_{dk+e} \le dk \]
by the equation~\eqref{eq:ClusterSize}. We further distinguish two subcases.

\vskip3pt\noindent\textsf{Case 2.1.2.1}.
Assume $m_1+\cdots+m_{d-2}\ge1$ or $m_{d-1}\ge2$. 
Since $2/d \ge 1/(d-1)=1-2w$, we have
\begin{align*}
\|f(C_r)\| &\ge (1-w)m_1 + m_2 + m_3 + \cdots + m_{d-1} + \sum_{\ell=d}^{dk+e} (w(\ell-d)+1)m_\ell \\
& = \sum_{\ell=1}^{dk+e} w\ell m_\ell + (1-2w)m_1 + \sum_{\ell=2}^{d-1} (1-\ell w) m_\ell + \sum_{\ell=d}^{dk+e} (1-dw)m_\ell \\
& = \sum_{\ell=1}^{dk+e} w\ell m_\ell + (1-2w)m_1 + \sum_{\ell=2}^{d-1} \frac{d-\ell}{d} m_\ell + \frac{1-dw}{d} \left( \sum_{\ell=2}^{d-1} \ell m_\ell + \sum_{\ell=d}^{dk+e} d m_\ell \right) \\
& \ge w(b_r+dk-d+e) + 1/(d-1) + (1-dw)k \\
& =  w(b_r-d+e) + k+1/(d-1) .
\end{align*}
This implies the lower bound~\eqref{eq:ClusterDensity} for $\|f(C_r)\|/|C_r|$ by the following tedious calculation:
\begin{itemize}
\item
If $e\ge2$ then
\begin{align*}
& (w(b_r-d+e) + k+1/(d-1))(dk+e)-(b_r+dk-d+e)(k+1)  \\
= & \frac{(b_r-d)(dk(d-4) + 2k-2) + (b_r+dk-d+e)(d-2)(e-2) )}{(2d-2)} \ge0.
\end{align*}
\item
If $e=1$ and $d\le 2k+2$ then
\begin{align*}
& (w(b_r-d+e) + k+1/(d-1))(2dk-d+2)-2k(b_r+dk-d+e) \\
= & \frac{(b_r-d)(d(d-4)(2k-1)+4(k-1)) + d(2k+2-d)}{2d-2} \ge0.
\end{align*}
\item
If $e=1$ and $d\ge 2k+2$ then
\begin{align*}
& (w(b_r-d+e) + k+1/(d-1))(d-1)-(b_r+dk-d+e)  \\
= & ((b_r-d)(d-4) + d-2k-2 ) /2 \ge0.
\end{align*}
\end{itemize}

\vskip3pt\noindent\textsf{Case 2.1.2.2}.
Assume $m_1+\cdots+m_{d-2}=0$ and $m_{d-1}\le 1$.
This implies  $b_r=d$ and $m_{d-1}=1$ since $m_1\ge b_r-d$ and $m_1+\cdots+m_{d-1}\ge1$.
Thus $|C_r|=dk+e$ and the equation~\eqref{eq:ClusterSize} becomes
\[ dm_d+(d+1)m_{d+1}+\cdots+(dk+e)m_{dk+e} = dk-d+e+1. \]
Reducing this equation modulo $d$ gives 
\[ m_{d+1}+2m_{d+2}+\cdots+(dk-d+e)m_{dk+e} \ge e+1.\]
since we already have $m_{d+1}+2m_{d+2}+\cdots+(dk-d+e)m_{dk+e} \ge e$.
Similarly as Case 2.1.2.1, we obtain
\begin{align*}
\|f(C_r)\| \ge w(dk+e) + 1/d + (1-dw)(dk-1)/d =  w(e+1)+k.
\end{align*}
This implies the inequality~\eqref{eq:ClusterDensity} since
\[ \frac{\|f(C_r)\|}{|C_r|} \ge 
\begin{cases} 
(k+3w)/(dk+e) \ge (k+1)/(dk+e) & \text{if } e\ge 2 \\
(k+2w)/(dk+e) \ge 2k/(2dk-d+2) & \text{if } e=1
\end{cases} \]
where the last inequality holds by the following calculation:
\[ (k+2w)(2dk-d+2) - 2k(dk+1) =  
\frac{d(d-3)(k-1)+d-4}{d-1} \ge0.\]

\vskip3pt\noindent\textsf{Case 2.2}. 
Assume $m_1+\cdots+m_{d-1}=0$.
Then $b_r=d$ since $0=m_1\ge b_r-d$.
Define a cluster 
\[ C_r := \begin{cases}
[x_r-2s+2d-2,x_r+d-1], & \text{if } s=dk+e-1,\\
[x_r,x_r-2s+d-3], & \text{if } s=-dk+d-e-1.
\end{cases}\]
Then $C_r$ has size $2dk-d+2e$ and is the disjoint union of all blocks in it by the following argument.
\begin{itemize}
\item
If $s=dk+e-1$ then the block containing $x_r-s+d-1$ (the smallest element of $C'_r$) has size at least $d$ since $m_1+\cdots+m_{d-1}=0$, and thus $D$ contains $x_r-2s+2d-2$ to dominate $x_r-s+2d-2$.
\item
If $s=-dk+d-e-1$ then the block containing $x_r-s+d-2$ (the largest element of $C'_r$) has size at least $d$ since $m_1+\cdots+m_{d-1}=0$, and thus $D$ contains $x_r-2s+d-2$ to dominate $x_r-s+d-2$.
\end{itemize}
Let $t_\ell$ be the number of blocks of size $\ell$ contained in $C_r$ for $1\le \ell \le dk+e$.
Then
\[ t_1 + 2t_2 + 3t_3+\cdots+(dk+e)t_{dk+e} = 2dk-d+2e. \]
A block of size $\ell\ge d+1$ in $C'_r$ gives $\ell-d$ many $1$-blocks in $C_r$ by  Lemma~\ref{lem:block}~\eqref{item:singleton}.
Thus
\[ t_1 \ge \sum_{\ell=d+1}^{dk+e} (\ell-d)m_\ell \ge e \]
where the last inequality is obtained by reducing the equation~\eqref{eq:ClusterSize} modulo $d$.
It follows that
\[ 2t_2+3t_3+\cdots+dt_d+dt_{d+1}+\cdots + dt_{dk+e} \le 2dk-d.\]
This implies the inequality~\eqref{eq:ClusterDensity} since we can argue similarly to Case 2.1.2.1 and obtain
\begin{align*}
\frac{\|f(C_r)\|}{|C_r|}
\ge \frac{w(2dk-d+2e) + (1-2w)e + (1-dw)(2dk-d)/d}{2dk-d+2e}
= \frac{2k+e-1}{2dk-d+2e}.
\end{align*}

Now we have the cluster $C_r$ which satisfies the inequality~\eqref{eq:ClusterDensity}.
When $s=dk+e-1$ we recursively write $(-\infty,c-1]$ as a disjoint union of clusters, where $c$ is the smallest element of the cluster $C_r$.
When $s=-dk+d-e-1$ we recursively write $[c+1,\infty)$ as a disjoint union of clusters, where $c$ is the largest element of the cluster $C_r$.
Let $Z_n$ be the union of all clusters contained in $[-n,n]$.
We already know the lower bound~\eqref{eq:ClusterDensity} for each cluster.
Therefore $\|f(Z_n)\| / |Z_n|$ satisfies the same lower bound.
Since every cluster is an interval of size at most $2dk-d+2e$, we have $Z_n= \ZZ\cap [-n+\ell_n,n-r_n]$ for some integers $\ell_n,r_n\in[0,2dk-d+2e-1]$. 
By Lemma~\ref{lem:density}, $\delta(f)$ satisfies the same lower bound as well.
\end{proof}

Specializing $d=4$ and $d=5$ in Theorem~\ref{thm:main} immediately gives the next two corollaries.

\begin{corollary}\label{cor:d=4}
If $s=4k$ or $s=-4k+2$ for some integer $k\ge1$ then $\dgamma(\ZZ,\{1,2,s\})= 2k/(8k-2)$.

If $s = 4k+1$ or $s = -4k+1$ for some integer $k\ge1$ then $\dgamma(\ZZ,\{1,2,s\})= (k+1)/(4k+2)$.

If $s = 4k+2$ or $s = -4k$ for some integer $k\ge1$ then $\dgamma(\ZZ,\{1,2,s\})= (k+1)/(4k+3)$.
\end{corollary}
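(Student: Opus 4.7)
The plan is to derive Corollary~\ref{cor:d=4} as a direct specialization of Theorem~\ref{thm:main} to $d=4$. The first step is the bookkeeping of which $s$ are actually covered by the corollary. Since $s \notin [0,2]$ and the residue class $s \equiv -1 \pmod 4$ is already handled by Proposition~\ref{prop:basic}~(iv) as noted in the paragraph preceding Theorem~\ref{thm:main}, the remaining $s$ split into six families indexed by the six choices of sign and $e \in \{1,2,3\}$. I would list them explicitly against the parametrization of Theorem~\ref{thm:main}: $e=1$ yields $s=4k$ or $s=-4k+2$; $e=2$ yields $s=4k+1$ or $s=-4k+1$; $e=3$ yields $s=4k+2$ or $s=-4k$, with $k \ge 1$ in every case. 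This matches the three cases in the statement of the corollary.

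Next I would substitute $d=4$ into the piecewise formula from Theorem~\ref{thm:main} and check which branch is active for each fixed $e$. For $e=1$, the condition $d \le 2k+2$ reduces to $k \ge 1$, so the middle branch applies and produces $(2k+e-1)/(2dk-d+2e) = 2k/(8k-2)$. For $e \in \{2,3\}$, the condition $d \le k+e+1$ becomes $k \ge 3-e$, which is automatic from $k \ge 1$; the top branch then gives $(k+1)/(4k+e)$, which specializes to $(k+1)/(4k+2)$ when $e=2$ and to $(k+1)/(4k+3)$ when $e=3$. In none of these cases does the third value $1/(d-1) = 1/3$ realize the minimum, which is consistent with the equivalences established inside the proof of Lemma~\ref{lem:1s}.

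I do not expect any real obstacle here. The minimum-of-three analysis was already carried out inside the proofs of Theorem~\ref{thm:main} and Lemma~\ref{lem:1s}, so the corollary reduces to a small table of substitutions and inequality checks; the argument is essentially clerical. The only thing requiring even mild care is ensuring the parametrization of $s$ in Theorem~\ref{thm:main} is exhaustive on the complement of $[0,2] \cup \{s : s \equiv -1 \pmod 4\}$, which is a one-line verification.
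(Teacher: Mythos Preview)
Your proposal is correct and matches the paper's approach exactly: the paper simply states that specializing $d=4$ in Theorem~\ref{thm:main} ``immediately gives'' the corollary, and your substitution and branch checks carry this out faithfully.
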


\begin{corollary}\label{cor:d=5}
If $s\in\{ -3, -2, 5, 6 \}$ then $\dgamma(\ZZ,\{1,2,3,s\})=1/4$.

If $s = 5k$ or $s = -5k+3$ for some integer $k\ge2$ then $\dgamma(\ZZ,\{1,2,3,s\})= 2k/(10k-3)$. 

If $s = 5k+1$ or $s = -5k+2$ for some integer $k\ge2$ then $\dgamma(\ZZ,\{1,2,3,s\})=(k+1)/(5k+2)$.

If $s = 5k+2$ or $s = -5k+1$ for some integer $k\ge1$ then $\dgamma(\ZZ,\{1,2,3,s\})=(k+1)/(5k+3)$.

If $s = 5k+3$ or $s = -5k$ for some integer $k\ge1$ then $\dgamma(\ZZ,\{1,2,3,s\})=(k+1)/(5k+4)$.
\end{corollary}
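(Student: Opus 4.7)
The plan is a direct specialization of Theorem~\ref{thm:main} to $d=5$, reduced to a bookkeeping exercise that matches the four $(e,k)$-branches of the theorem against the five bullet cases of the corollary. Since Theorem~\ref{thm:main} covers every $s\notin[0,3]$ with $s\not\equiv -1\pmod{5}$ via the parametrizations $s=5k+e-1$ or $s=-5k+4-e$ with $k\ge 1$ and $e\in\{1,2,3,4\}$, the task splits into (a) identifying for each $e$ the smallest $k$ for which the main formula in Theorem~\ref{thm:main} applies, and (b) sweeping up the remaining small cases via the ``otherwise'' branch.

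For part (a) I would specialize the thresholds in the case statement of Theorem~\ref{thm:main}. With $d=5$, the condition $e\ge 2,\ d\le k+e+1$ becomes $k\ge 4-e$, while $e=1,\ d\le 2k+2$ becomes $k\ge 2$. Thus for $e=3$ and $e=4$ the first branch applies for every $k\ge 1$, yielding $(k+1)/(5k+3)$ and $(k+1)/(5k+4)$, which account for the last two lines of the corollary via $s=5k+2,\ -5k+1$ and $s=5k+3,\ -5k$ respectively. For $e=2,\ k\ge 2$ the first branch yields $(k+1)/(5k+2)$, matching $s=5k+1,\ -5k+2$ with $k\ge 2$; and for $e=1,\ k\ge 2$ the second branch yields $2k/(10k-3)$, matching $s=5k,\ -5k+3$ with $k\ge 2$.

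For part (b), the ``otherwise'' branch contributes $1/(d-1)=1/4$ precisely when $(e,k)\in\{(1,1),(2,1)\}$. Substituting $(e,k)=(1,1)$ into the two parametrizations gives $s\in\{5,-2\}$, and $(e,k)=(2,1)$ gives $s\in\{6,-3\}$. Together these yield the set $\{-3,-2,5,6\}$ of the first bullet.

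No substantive obstacle arises: the corollary is a direct reading of Theorem~\ref{thm:main} once the case conditions are specialized to $d=5$. The only mild point to watch is that integers with $s\equiv -1\pmod{5}$ (namely $s\in\{\ldots,-11,-6,-1,4,9,\ldots\}$) lie outside the scope of Theorem~\ref{thm:main} and of the corollary; these are handled separately by Proposition~\ref{prop:basic}~(iv), which gives $\dgamma=1/5$.
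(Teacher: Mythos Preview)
Your proposal is correct and follows exactly the approach the paper uses: the paper simply states that the corollary is obtained by specializing $d=5$ in Theorem~\ref{thm:main}, and you have carried out that specialization carefully, correctly identifying the thresholds $k\ge 4-e$ for $e\ge 2$ and $k\ge 2$ for $e=1$, and correctly collecting the leftover pairs $(e,k)\in\{(1,1),(2,1)\}$ into the first bullet $\{-3,-2,5,6\}$.
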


Theorem~\ref{thm:main} tells us exactly when $\Gamma(\ZZ,\{1,2,\ldots,d-2,s\})$ has an efficient dominating set.

\begin{corollary}\label{cor:eff}
Let $d,s$ be integers with $d\ge2$ and $s\notin[0,d-2]$.
Then there exists an efficient dominating set for $\Gamma(\ZZ,\{1,2,\ldots,d-2,s\})$ if and only if $d=2$ or $s\equiv-1\pmod d$.
\end{corollary}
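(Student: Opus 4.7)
The plan is to combine Theorem~\ref{thm:RatioEff} with the explicit formula in Theorem~\ref{thm:main}. Since $|S|=d-1$ for $S=\{1,2,\ldots,d-2,s\}$, Theorem~\ref{thm:RatioEff} says an efficient dominating set exists if and only if $\dgamma(\ZZ,S)=1/d$, so the corollary reduces to proving that this equality holds precisely when $d=2$ or $s\equiv-1\pmod d$.

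I would first dispose of the two subcases in which $\dgamma(\ZZ,S)=1/d$ is easy. When $d=2$, we have $|S|=1$ and Proposition~\ref{prop:basic}(ii) immediately gives $\dgamma(\ZZ,S)=1/2=1/d$. When $d\ge3$ and $s\equiv-1\pmod d$, the elements of $S$ realize every nonzero residue class modulo $d$, so Proposition~\ref{prop:basic}(iv) (with its parameter ``$s$'' set to our $d-1$) yields $\dgamma(\ZZ,S)=1/d$.

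Next I would handle the remaining case $d\ge3$ with $s\not\equiv-1\pmod d$ and show that $\dgamma(\ZZ,S)>1/d$. Writing $s=dk+e-1$ or $s=-dk+d-e-1$ with $k\ge1$ and $e\in\{1,\ldots,d-1\}$, Theorem~\ref{thm:main} gives $\dgamma(\ZZ,S)$ as the minimum of the three quantities $(k+1)/(dk+e)$, $(2k+e-1)/(2dk-d+2e)$, and $1/(d-1)$. Checking each strictly against $1/d$: the first reduces to $d>e$, which holds because $e\le d-1$; the second reduces to $e(d-2)>0$, which holds since $e\ge1$ and $d\ge3$; the third is immediate. Hence the minimum strictly exceeds $1/d$, and Theorem~\ref{thm:RatioEff} rules out an efficient dominating set.

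The argument is a straightforward assembly of previously established results, so no genuine obstacle arises. The main point of care is confirming that the edge case $d=2$ is genuinely separate: the second of the three quantities above equals $1/d$ exactly when $d=2$, which is precisely why the case split at $d\ge3$ is needed to force strict inequality. I would also note that the representation in Theorem~\ref{thm:main} legitimately excludes $s\equiv-1\pmod d$, since for $e\in\{1,\ldots,d-1\}$ the residues $e-1$ and $(-e-1)\bmod d$ both lie in $\{0,1,\ldots,d-2\}$.
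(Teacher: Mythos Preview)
Your proof is correct and follows essentially the same route as the paper: both use Theorem~\ref{thm:main} together with Theorem~\ref{thm:RatioEff} to show that $\dgamma>1/d$ (and hence no efficient dominating set exists) when $d\ge3$ and $s\not\equiv-1\pmod d$. The only minor difference is in the ``if'' direction, where the paper exhibits explicit efficient dominating sets (block structures $(1^{s-1},s+1)^\infty$ and $d^\infty$) rather than computing $\dgamma=1/d$ via Proposition~\ref{prop:basic} and invoking the converse in Theorem~\ref{thm:RatioEff} as you do.
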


\begin{proof}
If $d=2$ then we have $\Gamma(\ZZ,\{1,2,\ldots,d-2,s\}) = \Gamma(\ZZ,\{s\})$ with $s\in\ZZ\setminus\{0\}$ and this graph has an efficient dominating set with block structure $(1^{s-1},s+1)^\infty$.
If $s\equiv -1\pmod d$ then the graph $\Gamma(\ZZ,\{1,2,\ldots,d-2,s\})$ has an efficient dominating set with block structure $d^\infty$. 
Finally, if $d\ge 3$ and $s\not\equiv -1\pmod d$ then $\Gamma(\ZZ,\{1,2,\ldots,d-2,s\})$ has  domination ratio less than $1/d$ by Theorem~\ref{thm:main} and thus has no efficient dominating set by Theorem~\ref{thm:RatioEff}. 
\end{proof}

Theorem~\ref{thm:main} also implies the domination number of some circulant graphs.

\begin{corollary}\label{cor:finite}
Let $d\ge2$, $k\ge1$, and $e\ge2$ be integers.

If $d\le k+e+1$ then $\gamma(\ZZ_{dk+e},\{-1,1,2,\ldots,d-2\}) = \gamma(\ZZ_{dk+e},\{1,2,\ldots,d-1\})=k+1$.

If $d\le 2k+2$ then $\gamma(\ZZ_{2dk-d+2},\{1,2,\ldots,d-2,dk\}) =2k$.
\end{corollary}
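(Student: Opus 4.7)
The plan is to derive both equalities from Theorem~\ref{thm:main} via Proposition~\ref{prop:finite}. For each circulant graph in the statement, I will identify an integer distance graph $\Gamma(\ZZ,\{1,2,\ldots,d-2,s\})$ together with a periodic dominating set whose period equals the modulus $n$ of the circulant graph and whose density matches the value supplied by Theorem~\ref{thm:main}; Proposition~\ref{prop:finite} then translates this density directly into the desired domination number.

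For the first equality, assume $e\ge 2$ and $d\le k+e+1$, so Theorem~\ref{thm:main} gives $\dgamma(\ZZ,\{1,2,\ldots,d-2,s\})=(k+1)/(dk+e)$ for both choices $s=dk+e-1$ and $s=-dk+d-e-1$. In each case the periodic dominating set $D$ with block structure $(d^k,e)^\infty$ from the proof of Lemma~\ref{lem:1s} has period $p=dk+e$ and satisfies $|D\cap[1,p]|/p=(k+1)/(dk+e)$, so it achieves the ratio. Reducing modulo $p$: when $s=dk+e-1$ we have $S_p=\{1,2,\ldots,d-2,dk+e-1\}$, which is $\{-1,1,2,\ldots,d-2\}$ in $\ZZ_p$; when $s=-dk+d-e-1$ the congruence $s\equiv d-1\pmod{p}$ yields $S_p=\{1,2,\ldots,d-1\}$. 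Proposition~\ref{prop:finite} therefore produces the common value $|D\cap[1,p]|=k+1$ for both circulant domination numbers. The second equality is obtained analogously by taking $e=1$ and $s=dk$: Theorem~\ref{thm:main} and Lemma~\ref{lem:1s} combine to show that the block structure $(d^{k-1},d+1,d^{k-1},1)^\infty$ yields a periodic dominating set of period $p=2dk-d+2$ with $2k$ elements per period achieving the ratio $2k/p$, and since $0<dk<p$ we get $S_p=\{1,2,\ldots,d-2,dk\}$, whence Proposition~\ref{prop:finite} delivers $\gamma(\ZZ_p,S_p)=2k$.

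The main step requiring explicit verification is that the block structure $(d^k,e)^\infty$ really is a dominating set of $\Gamma(\ZZ,\{1,2,\ldots,d-2,s\})$ for $s=-dk+d-e-1$ in addition to $s=dk+e-1$, and likewise that $(d^{k-1},d+1,d^{k-1},1)^\infty$ dominates when $s=dk$. Each such check is routine, reducing to confirming that the rightmost vertex of any $d$-block (the only vertex not automatically covered by the short edges of lengths $1,2,\ldots,d-2$) is dominated via the edge of length $s$ by the appropriate $D$-element in an adjacent period, and this is a direct calculation from the positions of the blocks in the structure.
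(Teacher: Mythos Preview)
Your proposal is correct and follows exactly the route the paper indicates: it invokes Theorem~\ref{thm:main} for the value of the domination ratio, the explicit periodic dominating sets from (the proof of) Lemma~\ref{lem:1s} to realize that ratio with the right period, and Proposition~\ref{prop:finite} to pass to the circulant domination numbers. The paper's own proof is the one-line ``This follows from Proposition~\ref{prop:finite}, the proof of Lemma~\ref{lem:1s}, and Theorem~\ref{thm:main},'' and you have simply unpacked that citation accurately.
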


\begin{proof}
This follows from Proposition~\ref{prop:finite}, the proof of Lemma~\ref{lem:1s}, and Theorem~\ref{thm:main}.
\end{proof}

\section{Conclusion}\label{sec:conclusion}

The (infinite) integer distance graph $\Gamma(\ZZ,S)$ is a natural extension of the (finite) circulant graph $\Gamma(\ZZ_n,S)$. 
In this paper we continue the study of the domination ratio of $\Gamma(\ZZ,S)$ initiated in earlier work~\cite{DomRatio}.
Since the domination ratio $\dgamma(\ZZ,\{1,2,\ldots,d-2,s\})$ given by Theorem~\ref{thm:main} is close to $1/d$ when $k$ is large, we may expect that the domination number $\gamma(\ZZ_n,\{1,2,\ldots,d-2,s\})$ is close to $\lceil n/d \rceil$ when $n$ is large and $s$ is close to $n/2$.
It would be interesting to find the precise value of $\gamma(\ZZ_n,\{1,2,\ldots,d-2,s\})$ and compare with the domination ratio $\dgamma(\ZZ,\{1,2,\ldots,d-2,s\})$.
One could also examine the domination ratio of other families of integer distance graphs in the future.

Finally, the domination ratio of the integer distance digraph $\Gamma(\ZZ,S)$ is related to the problems of integer tiling and sets of integers with missing differences, as explained in Section~\ref{sec:intro}.
The number theoretic methods used in work on these two problems might be helpful to further investigate of the domination ratio of $\Gamma(\ZZ,S)$.

\section*{Acknowlegments}

The author thanks Christian Gaetz for helpful conversations on sets of integers with missing differences, and also thanks an anonymous referee for providing valuable comments and suggestions on the manuscript.

\end{document}